\newtheorem{thm}{Theorem}
\newtheorem{lemma}{Lemma}
\newtheorem{cor}{Corollary}
\def\ind{\mathds{1}}
\theoremstyle{definition}
\newtheorem{ex}{Example}
\begin{document}
\begin{frontmatter}

\title{Random convolution of inhomogeneous distributions with $\boldsymbol{\mathcal{O}}$-exponential tail}
%\author[]{\inits{}\fnm{}\snm{}\corref{cor1}}\email{}
%\cortext[cor1]{Corresponding author.}
%
%\author[]{\inits{}\fnm{}\snm{}}\email{}
%
%%\fnref{f1}
%%\fntext[]{Some remarks}
%
%\address[]{}
%\address[]{}

\author[a]{\inits{S.}\fnm{Svetlana}\snm{Danilenko}}\email{svetlana.danilenko@vgtu.lt}
\author[b]{\inits{S.}\fnm{Simona}\snm{Pa\v skauskait\. e}}\email{simona.paskauskaite@mif.vu.stud.lt}
\author[b]{\inits{J.}\fnm{Jonas}\snm{\v Siaulys}\corref{cor1}}\email{jonas.siaulys@mif.vu.lt}
\cortext[cor1]{Corresponding author.}

\address[a]{Faculty of Fundamental Sciences, Vilnius
Gediminas Technical University,
Saul\.etekio al. 11, Vilnius LT-10223, Lithuania}
\address[b]{Faculty of Mathematics and Informatics, Vilnius
University,\break Naugarduko 24, Vilnius LT-03225, Lithuania}

\markboth{S. Danilenko et al.}{Random convolution of inhomogeneous distributions with $\mathcal
{O}$-exponential tail}

\begin{abstract} Let $\{\xi_1,\xi_2,\ldots\}$ be a sequence of
independent random variables (not necessarily identically distributed),
and $\eta$ be a counting random variable independent of this sequence.
We obtain sufficient conditions on $\{\xi_1,\xi_2,\ldots\}$ and~$\eta$
under which the distribution function of the random sum $S_\eta=\xi
_1+\xi_2+\cdots+\xi_\eta$ belongs to the class of $\mathcal{O}$-exponential distributions.
\end{abstract}

%\begin{keyword} . \sep.
%\MSC[2010] . \sep.
%\end{keyword}
%
\begin{keyword}
Heavy tail \sep exponential tail \sep $\mathcal{O}$-exponential tail \sep random
sum \sep
random convolution \sep inhomogeneous distributions \sep closure property
\MSC[2010] 62E20 \sep 60E05 \sep60F10 \sep44A35
\end{keyword}

\received{28 January 2016}% Updated by VTEXPTS2LaTeX.exe, 29.03.2016
%14:29
%
\revised{21 March 2016}% Updated by VTEXPTS2LaTeX.exe, 29.03.2016 14:29
\accepted{21 March 2016}% Updated by VTEXPTS2LaTeX.exe, 29.03.2016 14:29
\publishedonline{4 April 2016}

\end{frontmatter}

\section{Introduction}\label{i}

Let $\{\xi_1,\xi_2,\ldots\}$ be a sequence of independent random
variables (r.v.s) with distribution functions (d.f.s) $\{F_{\xi
_1},F_{\xi_2},\ldots\}$, and let $\eta$ be a counting r.v., that~is,
an integer-valued, nonnegative, and nondegenerate at zero r.v. In
addition, suppose that the r.v. $\eta$ and r.v.s $\{\xi_1,\xi_2,\ldots
\}$ are independent. Let
$S_0=0$ and $S_n=\xi_1+\xi_2+\cdots+\xi_n$, $n\in\mathbb{N}$, be the
partial sums, and let\vadjust{\eject}
\begin{equation*}
S_\eta=\sum\limits_{k=1}^{\eta}\xi_k
\end{equation*}
be the random sum of $\{\xi_1,\xi_2,\ldots\}$.

We are interested in conditions under which the d.f.\ of $S_\eta$
\[
F_{S_\eta}(x)=\mathbb{P}(S_\eta\leqslant x)=\sum\limits_{n=0}^{\infty
}\mathbb{P}(\eta=n)\mathbb{P}(S_n\leqslant x)
\]
belongs to the class of $\mathcal{O}$-\textit{exponential} distributions.

{According to Albin and Sunden }\cite{al+sun} or Shimura and Watanabe
\cite{shim-wat},\textit{ a d.f. $F$ belongs to the class of $\mathcal
{O}$-\textit{exponential} distributions $\mathcal{OL}$ if
\begin{equation*}
0<\liminf\limits_{x\rightarrow\infty}\frac{\overline{F}(x+a)}{\overline
{F}(x)}\leqslant\limsup\limits_{x\rightarrow\infty}\frac{\overline
{F}(x+a)}{\overline{F}(x)}<\infty
\end{equation*}
for all $a\in\mathbb{R}$}, where $\overline{F}(x)=1-F(x)$, $x\in\mathbb
{R}$, is the tail of a d.f. $F$.

Note that if $F\in\mathcal{OL}$, then $\overline{F}(x)>0$ for all $x\in
\mathbb{R}$.

It is obvious that a d.f. $F$ belongs to the class $\mathcal{OL}$ if
and only if
\begin{equation}\label{ii}
\limsup\limits_{x\rightarrow\infty}\frac{\overline{F}(x-1)}{\overline
{F}(x)}<\infty
\end{equation}
or, equivalently, if and only if
\begin{equation*}
\sup\limits_{x\geqslant0}\frac{\overline{F}(x-1)}{\overline
{F}(x)}<\infty.
\end{equation*}

The last condition shows that class $\mathcal{OL}$ is quite wide. We
further describe some more popular subclasses of $\mathcal{OL}$ for
which we will present some results on the random convolution of
distributions from these subclasses.

\textit{A d.f. $F$ is said to
belong to the class $\mathcal{L}$ of long-tailed d.f.s if for every
fixed $a>0$, we have}
\[
\lim\limits_{x\rightarrow\infty}\frac{\overline{F}(x+a)}{\overline{F}(x)}=1.
\]

\textit{A d.f. $F$ is said to belong to the class $\mathcal{L}(\gamma)$
of exponential distributions with some $\gamma>0$ if
for any fixed $a>0$, we have}
\[
\lim\limits_{x\rightarrow\infty}\frac{\overline{F}(x+a)}{\overline
{F}(x)}={\rm e}^{-a\gamma}.
\]

\textit{A d.f. $F$
belongs to the class $\mathcal{D}$ \textup{(or }has a dominatingly
varying tail\textup{)} if for every fixed $a\in(0,1)$, we have}
\[
\limsup\limits_{x\rightarrow\infty}\frac{\overline{F}(xa)}{\overline
{F}(x)}<\infty.
\]

\textit{A d.f. $F$ supported on the interval $[0,\infty)$ belongs to
the class $\mathcal{S}$ \textup{(or }is subexponential\textup{)}
if}
\[
\lim\limits_{x\rightarrow\infty}\frac{\overline{F*F}(x)}{\overline{F}(x)}=2,
\]
\textit{where, as usual, $*$ denotes the convolution of d.f.s.}\vadjust{\eject}

\textit{A d.f. $F$ supported on the interval $[0,\infty)$ belongs to
the class $\mathcal{S}^*$
\textup{(} or is strongly subexponential\textup{)}
if }
\[
\mu:=\int\limits_{[0,\infty)}x{\rm d} F(x)<\infty\quad  {\rm and } \quad
\int\limits_0^x\overline{ F}(x-y)\overline{F}(y){\rm d} y \mathop{\sim
}\limits_{x\rightarrow\infty} 2\mu\overline{F}(x).
\]

If a d.f. $F$ is supported
on $\mathbb{R}$, then $F$ belongs to some of the classes $\mathcal{S}$
or $\mathcal{S}^*$
if $F^+(x) = F(x)\ind_{\{[0,\infty)\}}(x)$ belongs to the
corresponding class.

The presented definitions, together with Lemma 2 of Chistyakov \cite
{chist}, Lemma 9 of Denisov et al. \cite{den+}, Lemma 1.3.5(a) of
Embrechts et al. \cite{EKM},
and Lemma 1 of Kaas and Tang \cite{kaas+tang}, imply that
\[
\mathcal{S}^*\subset\mathcal{S}\subset\mathcal{L}\subset\mathcal{OL},
\ \mathcal{D}\subset\mathcal{OL},\quad \bigcup\limits_{\gamma>0}\mathcal
{L}(\gamma)\subset\mathcal{OL}.
\]

Now we present a few known results on when the d.f. $F_{S_\eta}$
belongs to some class. The first result about subexponential
distributions was proved
by Embrechts and Goldie (Theorem 4.2 in \cite{em+gol}) and Cline
(Theorem 2.13 in \cite{cline}).

\begin{thm}\label{t1}
\textit{Let $\{\xi_1,\xi_2,\ldots\}$ be independent copies of a
nonnegative r.v. $\xi$ with subexponential d.f. $F_\xi$. Let $\eta$ be
a counting r.v. independent of $\{\xi_1,\xi_2,\ldots\}$. If $\mathbb
{E}(1+\delta)^\eta<\infty$
for some $\delta>0$, then $F_{S_{\eta}}\in\mathcal{S}$.}
\end{thm}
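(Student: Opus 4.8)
The plan is to work directly with the series representation
\[
\overline{F_{S_\eta}}(x)=\sum_{n=0}^{\infty}\mathbb{P}(\eta=n)\,\overline{F_\xi^{*n}}(x),
\]
which is legitimate because $\eta$ is independent of $\{\xi_1,\xi_2,\ldots\}$, and to establish the tail equivalence $\overline{F_{S_\eta}}(x)\sim(\mathbb{E}\eta)\,\overline{F_\xi}(x)$ as $x\to\infty$. Since $\xi$ is nonnegative, $F_\xi$ is supported on $[0,\infty)$ and the standard subexponential machinery applies to each convolution power. Once the tail equivalence with a finite positive constant $\mathbb{E}\eta$ is in hand, membership $F_{S_\eta}\in\mathcal{S}$ follows from the closure property of $\mathcal{S}$ under asymptotic equivalence of tails.

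First I would recall the elementary consequence of $F_\xi\in\mathcal{S}$ that for each fixed $n\geqslant1$,
\[
\lim_{x\to\infty}\frac{\overline{F_\xi^{*n}}(x)}{\overline{F_\xi}(x)}=n,
\]
which is proved by induction on $n$ starting from the defining relation for $\mathcal{S}$. This identifies the pointwise-in-$n$ limit of every term of the normalized series $\overline{F_{S_\eta}}(x)/\overline{F_\xi}(x)=\sum_{n\geqslant0}\mathbb{P}(\eta=n)\,\overline{F_\xi^{*n}}(x)/\overline{F_\xi}(x)$.

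The main obstacle is to interchange the limit $x\to\infty$ with the infinite summation over $n$, and this is exactly where the moment hypothesis on $\eta$ enters. Here I would invoke Kesten's inequality: for every $\varepsilon>0$ there exists a finite constant $K=K(\varepsilon)$ such that $\overline{F_\xi^{*n}}(x)\leqslant K(1+\varepsilon)^{n}\,\overline{F_\xi}(x)$ for all $n\geqslant1$ and all $x\geqslant0$. Taking $\varepsilon=\delta$, the $n$-th summand is dominated by $K(1+\delta)^{n}\mathbb{P}(\eta=n)$, and these bounds are summable since $\sum_{n\geqslant0}K(1+\delta)^{n}\mathbb{P}(\eta=n)=K\,\mathbb{E}(1+\delta)^{\eta}<\infty$ by assumption. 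The dominated convergence theorem for series then yields
\[
\lim_{x\to\infty}\frac{\overline{F_{S_\eta}}(x)}{\overline{F_\xi}(x)}
=\sum_{n=0}^{\infty}n\,\mathbb{P}(\eta=n)=\mathbb{E}\eta.
\]

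To finish, I would note that $\mathbb{E}(1+\delta)^{\eta}<\infty$ forces $\mathbb{E}\eta<\infty$, while nondegeneracy of $\eta$ at zero gives $\mathbb{P}(\eta\geqslant1)>0$ and hence $\mathbb{E}\eta>0$. Thus $F_{S_\eta}$ is tail-equivalent to the subexponential d.f.\ $F_\xi$ with a strictly positive, finite constant, and by the closure of the class $\mathcal{S}$ under such tail equivalence we conclude $F_{S_\eta}\in\mathcal{S}$. I anticipate that the only genuinely nontrivial ingredient is Kesten's uniform bound, which supplies the dominating sequence; the rest is the pointwise asymptotics and a routine application of dominated convergence.
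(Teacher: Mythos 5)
Your proposal is correct, but note that the paper itself does not prove Theorem~1 at all: it is quoted as a known result, attributed to Embrechts and Goldie (Theorem~4.2 of their 1982 paper) and Cline (Theorem~2.13), so there is no in-paper proof to compare against. Your argument --- the series representation $\overline{F_{S_\eta}}(x)=\sum_{n\geqslant 0}\mathbb{P}(\eta=n)\,\overline{F_\xi^{*n}}(x)$, the pointwise limits $\overline{F_\xi^{*n}}(x)/\overline{F_\xi}(x)\to n$, Kesten's uniform bound $\overline{F_\xi^{*n}}(x)\leqslant K(1+\delta)^n\overline{F_\xi}(x)$ as the dominating sequence, dominated convergence to get $\overline{F_{S_\eta}}(x)\sim(\mathbb{E}\eta)\,\overline{F_\xi}(x)$, and closure of $\mathcal{S}$ under tail equivalence with a constant in $(0,\infty)$ --- is exactly the classical route taken in those references, and all the ingredients you invoke (including $\mathbb{E}\eta\in(0,\infty)$, which follows from the moment condition and nondegeneracy of $\eta$ at zero) are applied correctly.
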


In the case of strongly subexponential d.f.s, the following result,
which involves weaker restrictions
on the r.v. $\eta$, can be derived from Theorem 1 of Denisov et al.
\cite{den+2010} and Corollary 2.36 of Foss et al. \cite{foss}.

\begin{thm}\label{t2} \textit{Let $\{\xi_1,\xi_2,\ldots\}$ be
independent copies of a nonnegative r.v. $\xi$ with strongly
subexponential d.f. $F_\xi$ and finite mean $\mathbb{E}\xi$. Let $\eta$
be a counting r.v.\ independent of $\{\xi_1,\xi_2,\ldots\}$.
If $\mathbb{P}(\eta>x/c)\mathop{=}\limits_{x\rightarrow\infty}
o(\overline{F}_\xi(x))$
for some $ c > \mathbb{E}\xi$, then $F_{S_{\eta}}\in\mathcal{S}^*$.}
\end{thm}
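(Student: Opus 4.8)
The plan is to establish the result in two steps: first show that $F_{S_\eta}$ is tail-equivalent to $F_\xi$, and then transfer membership in $\mathcal{S}^*$ across this tail-equivalence. The two cited results furnish precisely these ingredients: Theorem~1 of Denisov et al.\ \cite{den+2010} supplies the asymptotic relation for the randomly stopped sum, and Corollary~2.36 of Foss et al.\ \cite{foss} supplies the closure property.

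As a preliminary observation I would record that the hypotheses force $0<\mathbb{E}\eta<\infty$. Writing $\mathbb{E}\eta=\sum_{n\geqslant0}\mathbb{P}(\eta>n)$ and comparing this series with $\int_0^\infty\mathbb{P}(\eta>t)\,{\rm d}t$, the substitution $t=x/c$ together with the assumption $\mathbb{P}(\eta>x/c)=o(\overline{F}_\xi(x))$ bounds the tail part above by a constant multiple of $\int_0^\infty\overline{F}_\xi(x)\,{\rm d}x=\mathbb{E}\xi<\infty$; positivity is immediate since $\eta$ is nondegenerate at zero. Consequently $\mathbb{E}S_\eta=\mathbb{E}\eta\,\mathbb{E}\xi\in(0,\infty)$, which both secures the finite-mean requirement in the definition of $\mathcal{S}^*$ and renders the tail-equivalence constant below finite and strictly positive.

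Next I would invoke Theorem~1 of \cite{den+2010}. The assumption $\mathbb{P}(\eta>x/c)=o(\overline{F}_\xi(x))$ with $c>\mathbb{E}\xi$ is exactly the control on the counting variable that excludes ``$\eta$ large'' as a mechanism for large values of $S_\eta$: by the strong law of large numbers $S_\eta$ is typically of size $\mathbb{E}\xi\cdot\eta$, so a large value of $S_\eta$ driven by $\eta$ alone would require $\eta$ to be of order $x/\mathbb{E}\xi$ or larger, an event of probability at most $\mathbb{P}(\eta>x/c)=o(\overline{F}_\xi(x))$. With this contribution negligible, the single-big-jump principle applies and Theorem~1 of \cite{den+2010} delivers
\[
\overline{F}_{S_\eta}(x)\mathop{\sim}\limits_{x\rightarrow\infty}\mathbb{E}\eta\cdot\overline{F}_\xi(x).
\]
It then remains to apply Corollary~2.36 of \cite{foss}, which asserts that $\mathcal{S}^*$ is stable under tail-equivalence within the class of distributions of finite mean. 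Since $F_{S_\eta}$ has finite mean and its tail is asymptotically proportional to $\overline{F}_\xi$ with the positive finite constant $\mathbb{E}\eta$, and $F_\xi\in\mathcal{S}^*$, this yields $F_{S_\eta}\in\mathcal{S}^*$.

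I expect the main obstacle to be checking that the present hypotheses match the precise technical assumptions of Theorem~1 in \cite{den+2010}: one must confirm that the tail bound on $\eta$ in the single-parameter form $\mathbb{P}(\eta>x/c)=o(\overline{F}_\xi(x))$ for one $c>\mathbb{E}\xi$ is sufficient to trigger the randomly-stopped-sum asymptotic there, rather than a variant phrased through the integrated tail of $F_\xi$, and to carry out the integrability step giving $\mathbb{E}\eta<\infty$. By contrast, the passage from the tail-equivalence to the integral characterization of $\mathcal{S}^*$ needs no separate estimate, being entirely subsumed in the closure statement of \cite{foss}.
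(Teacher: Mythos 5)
Your proposal is correct and coincides with the paper's own treatment: the paper never proves Theorem~\ref{t2}, but only remarks that it ``can be derived from Theorem 1 of Denisov et al.~\cite{den+2010} and Corollary 2.36 of Foss et al.~\cite{foss}'', which is exactly the two-step derivation you carry out --- the randomly-stopped-sum asymptotics $\overline{F}_{S_\eta}(x)\sim\mathbb{E}\eta\,\overline{F}_\xi(x)$ from the first reference, followed by the closure of $\mathcal{S}^*$ under tail equivalence (among finite-mean distributions) from the second. Your preliminary verification that $\mathbb{E}\eta<\infty$ (hence $\mathbb{E}S_\eta=\mathbb{E}\eta\,\mathbb{E}\xi\in(0,\infty)$ by Wald's identity), obtained by comparing $\int_0^\infty\mathbb{P}(\eta>x/c)\,{\rm d}x$ with $\int_0^\infty\overline{F}_\xi(x)\,{\rm d}x=\mathbb{E}\xi$, is a sound detail that the paper leaves implicit.
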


Similar results for classes $\mathcal{D}$, $\mathcal{L}$, and $\mathcal
{OL}$ can be found in the papers of Leipus and \v{S}iaulys \cite
{ls-2012} and Danilenko and \v{S}iaulys \cite{ds-2015}. We further
present Theorem 6 from \cite{ls-2012}.

\begin{thm}\label{t3} \textit{Let $\{\xi_1,\xi_2,\ldots\}$ be
independent r.v.s with common d.f. $F_\xi\in\mathcal{L}$, and let $\eta$ be
a counting r.v. independent of $\{\xi_1,\xi_2,\ldots\}$  having
d.f.\ $F_\eta$.
If $\overline{F}_\eta(\delta
x)\mathop{=}\limits_{x\rightarrow\infty}\break
o(\sqrt{x}\ \overline{F}_\xi(x))$ for each $\delta\in(0,1)$, then
$F_{S_{\eta}}\in\mathcal{L}$.}
\end{thm}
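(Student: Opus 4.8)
The plan is to work from the series representation $\overline{F}_{S_\eta}(x)=\sum_{n=1}^{\infty}p_n\,\overline{F_\xi^{*n}}(x)$ with $p_n=\mathbb{P}(\eta=n)$ (the $n=0$ term vanishes for $x\geqslant0$) and to verify the defining relation of $\mathcal{L}$ directly. Since the tail $\overline{F}_{S_\eta}$ is nonincreasing, one has $\overline{F}_{S_\eta}(x+a)\leqslant\overline{F}_{S_\eta}(x)$, so $\limsup_{x\rightarrow\infty}\overline{F}_{S_\eta}(x+a)/\overline{F}_{S_\eta}(x)\leqslant1$ for every $a>0$, and it remains only to prove the matching lower bound for the $\liminf$. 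Writing the ratio as $1-\mathbb{P}(x<S_\eta\leqslant x+a)/\mathbb{P}(S_\eta>x)$, the whole problem reduces to showing, for each fixed $a>0$, that $\mathbb{P}(x<S_\eta\leqslant x+a)=o(\mathbb{P}(S_\eta>x))$ as $x\rightarrow\infty$.

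First I would bound the denominator from below. Let $n_0=\min\{n\geqslant1:p_n>0\}$ and fix $L$ with $q:=\mathbb{P}(\xi>-L)>0$. Keeping one summand large and the remaining $n_0-1$ summands above $-L$ gives $\mathbb{P}(S_\eta>x)\geqslant p_{n_0}\overline{F_\xi^{*n_0}}(x)\geqslant p_{n_0}q^{\,n_0-1}\overline{F}_\xi(x+(n_0-1)L)$, and since $F_\xi\in\mathcal{L}$ forces $\overline{F}_\xi(x+(n_0-1)L)\sim\overline{F}_\xi(x)$, we obtain $\mathbb{P}(S_\eta>x)\geqslant c\,\overline{F}_\xi(x)$ for some $c>0$ and all large $x$. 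Hence it suffices to prove the numerator is $o(\overline{F}_\xi(x))$.

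For the numerator $\mathbb{P}(x<S_\eta\leqslant x+a)=\sum_{n\geqslant1}p_n\,\mathbb{P}(x<S_n\leqslant x+a)$ I would fix a small $\delta\in(0,1)$ and split the series at $n=\lfloor\delta x\rfloor$. On the range $n>\delta x$ the key external tool is the Kolmogorov--Rogozin concentration inequality, which for i.i.d.\ summands yields $\mathbb{P}(x<S_n\leqslant x+a)\leqslant C(a)/\sqrt n$ uniformly in $x$, with $C(a)$ depending only on $a$ and $F_\xi$. Then $\sum_{n>\delta x}p_n\mathbb{P}(x<S_n\leqslant x+a)\leqslant (C(a)/\sqrt{\delta x})\,\overline{F}_\eta(\delta x)$, and here the hypothesis $\overline{F}_\eta(\delta x)=o(\sqrt x\,\overline{F}_\xi(x))$ is used precisely: the factor $\sqrt x$ in the assumption cancels the $1/\sqrt x$ produced by the concentration bound, leaving $o(\overline{F}_\xi(x))$. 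This is the structural reason the normalisation $\sqrt x\,\overline{F}_\xi(x)$ appears in the statement.

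The remaining range $1\leqslant n\leqslant\delta x$ is the main obstacle, since one cannot invoke the single--big--jump heuristic when $F_\xi$ is merely long-tailed and not subexponential. The plan is to exploit insensitivity: for every fixed $n$ the convolution $F_\xi^{*n}$ again lies in $\mathcal{L}$, so $\mathbb{P}(x<S_n\leqslant x+a)=o(\overline{F_\xi^{*n}}(x))$; the difficulty is to make this uniform over $n$ up to $\delta x$, a range growing proportionally to $x$. I would attempt such uniformity by conditioning on the maximal summand, which must exceed $S_n/n>x/n$ on the relevant event, and applying the concentration inequality to the remaining $n-1$ summands, producing a bound of the form $\mathbb{P}(x<S_n\leqslant x+a)\leqslant C(a)\sqrt n\,\overline{F}_\xi(x/n)$; combined with a matching lower estimate for $\overline{F_\xi^{*n}}(x)$, this should control the ratio uniformly once $\delta$ is small, after which letting $\delta\rightarrow0$ closes the argument. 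Establishing this uniform insensitivity and patching it continuously to the concentration regime at the boundary $n\asymp\delta x$ is the delicate technical core of the proof.
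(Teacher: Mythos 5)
Your skeleton is the right one, and it is in fact the standard one: reduce membership in $\mathcal{L}$ to $\mathbb{P}(x<S_\eta\leqslant x+a)=o(\mathbb{P}(S_\eta>x))$, bound the denominator below by $c\,\overline{F}_\xi(x)$ using a single term of the series, split the sum over $n$ at $n\asymp\delta x$, and kill the range $n>\delta x$ with the Kolmogorov--Rogozin concentration inequality, which is exactly where the hypothesis $\overline{F}_\eta(\delta x)=o(\sqrt{x}\,\overline{F}_\xi(x))$ is consumed. Note that this paper never proves Theorem~\ref{t3}: it is quoted from Leipus and \v{S}iaulys \cite{ls-2012}. The closest in-paper comparison is the proof of Theorem~\ref{tt3}, whose architecture matches yours piece by piece: the decomposition into $\mathcal{K}_1,\ldots,\mathcal{K}_4$, Lemma~\ref{aul2} for the slab probabilities at large $n$, and the lower bound \eqref{ppp2}.

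The gap is the moderate range $1\leqslant n\leqslant\delta x$, and it is not merely the unfinished technicality you acknowledge: the method you sketch for it cannot work. Your bound $\mathbb{P}(x<S_n\leqslant x+a)\leqslant C(a)\sqrt{n}\,\overline{F}_\xi(x/n)$ is valid but vacuous, because for long-tailed $F_\xi$ outside $\mathcal{D}$ the quantity $\overline{F}_\xi(x/n)$ dwarfs $\overline{F_\xi^{*n}}(x)$, and no ``matching lower estimate'' can repair this. Concretely, take $\overline{F}_\xi(x)={\rm e}^{-\sqrt{x}}$ (Weibull, subexponential, hence in $\mathcal{L}$) and $n=2$: your bound is of order ${\rm e}^{-\sqrt{x/2}}$, while $\overline{F_\xi^{*2}}(x)\sim 2{\rm e}^{-\sqrt{x}}$, so the ratio of bound to target grows like ${\rm e}^{(1-1/\sqrt{2})\sqrt{x}}$ --- the argument already fails at $n=2$. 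The loss is structural: after conditioning on the maximal summand exceeding $x/n$, the concentration inequality applied to the other $n-1$ summands only uses that they land in an interval of length $a$, not where that interval sits; for small $n$ the factor $1/\sqrt{n-1}$ is $O(1)$, and you have merely re-derived an upper bound of the same shape as the trivial bound $\overline{F_\xi^{*n}}(x)\leqslant n\overline{F}_\xi(x/n)$ for the whole tail, not for the thin slab $(x,x+a]$. The tool that actually delivers uniform insensitivity over $n\leqslant\delta x$ --- both in \cite{ls-2012} and in this paper's proof of Theorem~\ref{tt3} --- is the Embrechts--Goldie convolution inequality, Lemma~\ref{aul1}: given $\varepsilon>0$, choose $c_5$ so that $\sup_{z\geqslant c_5}\overline{F}_\xi(z-a)/\overline{F}_\xi(z)\leqslant 1+\varepsilon$, then apply Lemma~\ref{aul1} inductively with the shifting thresholds $v=c_5$, $v=x/2+1/2$, $v=x/3+2/3,\ldots$ to conclude that $\mathbb{P}(S_n>x-a)/\mathbb{P}(S_n>x)\leqslant 1+\varepsilon$ uniformly over all $n$ with $n(c_5-1)+1\leqslant x$, i.e.\ over precisely your moderate range; this makes the slab contribution there at most $\varepsilon\,\mathbb{P}(S_\eta>x)$ with $\varepsilon$ arbitrary. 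Without that lemma, or some substitute playing the same role, your proof does not close.
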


In all presented results, r.v.s $\{\xi_1,\xi_2,\ldots\}$ are
identically distributed. In this work, we consider independent, but not
necessarily identically distributed, r.v.s. As was noted, we restrict
our consideration on the class $\mathcal{OL}$. In fact, in this paper,
we generalize the results of \cite{ds-2015}. If $\{\xi_1,\xi_2,\ldots\}
$ may be not identically distributed, then various collections of
conditions on r.v.s $\{\xi_1,\xi_2,\ldots\}$ and $\eta$ imply that
$F_{S_\eta}\in\mathcal{OL}$.
The rest of the paper is organized as follows. In Section~\ref{m}, we
formulate our main results. In Section~\ref{au}, we present all
auxiliary assertions, and the detailed proofs of the main results are
presented in Section~\ref{p}. Finally, a few examples of $\mathcal
{O}$-exponential random sums are described in Section~\ref{e}.

\section{Main results}\label{m}

In this section, we formulate our main results. The first result
describes the situation where the tails of d.f.s $F_{\xi_k}$ for large
indices $k$ are uniformly comparable with itself at the points $x$ and
$x-1$ for all $x\in[0,\infty)$.

\begin{thm}\label{tt1} \textit{Let $\{\xi_1,\xi_2, \ldots\}$ be
independent nonnegative random variables with d.f.s $\{F_{\xi_1},F_{\xi
_2},\ldots\}$, and let $\eta$ be a counting r.v. independent of $\{\xi
_1,\xi_2, \ldots\}$. Then $F_{S_\eta} \in\mathcal{OL}$ if the
following three conditions are satisfied.}

$\bullet$ \textit{For some $\kappa\in{\rm{supp}} (\eta)\setminus
\{0\}=\{n\in\mathbb{N}: \mathbb{P}(\eta=n)>0\}$, $F_{\xi_\kappa} \in
\mathcal{OL}$}.

$\bullet$ \textit{For each $k \in{\rm supp}(\eta)$, $k\leqslant
\kappa$, either $\lim\limits_{x\rightarrow\infty} \tfrac{\overline
{F}_{\xi_k}(x)}{\overline{F}_{\xi_\kappa}(x)}=0$ or $F_{\xi_k} \in
\mathcal{OL}$.}

$\bullet$ $\sup\limits_{x\geqslant0}\sup\limits_{k\geqslant
1}\tfrac{\overline{F}_{\xi_{\kappa+k}}(x-1)}{\overline{F}_{\xi_{\kappa
+k}}(x)}<\infty$.
\end{thm}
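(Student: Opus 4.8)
The plan is to verify the membership criterion~\eqref{ii} for $F_{S_\eta}$, that is, to show $\sup_{x\ge0}\overline{F}_{S_\eta}(x-1)/\overline{F}_{S_\eta}(x)<\infty$. First I would write the tail of the random sum as the weighted series
\[
\overline{F}_{S_\eta}(x)=\sum_{n=1}^{\infty}\mathbb{P}(\eta=n)\,\overline{F}_{S_n}(x),\qquad x\ge0,
\]
where the $n=0$ term drops out since $S_0=0$. The task then reduces to bounding, uniformly in $x\ge0$, the ratio of this series evaluated at $x-1$ and at $x$ by a finite constant.

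The engine of the argument is a one-sided convolution estimate. Put $C_\kappa:=\sup_{x\ge0}\overline{F}_{\xi_\kappa}(x-1)/\overline{F}_{\xi_\kappa}(x)$, which is finite by the first condition and satisfies $C_\kappa\ge1$; because $\xi_\kappa\ge0$, the inequality $\overline{F}_{\xi_\kappa}(t-1)\le C_\kappa\,\overline{F}_{\xi_\kappa}(t)$ in fact holds for every $t\in\mathbb{R}$ (for $t<0$ both tails equal $1$). Hence, for any nonnegative $Y$ independent of $\xi_\kappa$, conditioning on $Y$ and using this bound under the integral sign gives
\[
\overline{F}_{\xi_\kappa+Y}(x-1)=\int_{[0,\infty)}\overline{F}_{\xi_\kappa}(x-1-y)\,\mathrm{d}F_Y(y)\le C_\kappa\,\overline{F}_{\xi_\kappa+Y}(x).
\]
Taking $Y=S_n-\xi_\kappa\ge0$, which is available precisely when $n\ge\kappa$ (so that $\xi_\kappa$ is one of the summands of $S_n$), I obtain $\overline{F}_{S_n}(x-1)\le C_\kappa\,\overline{F}_{S_n}(x)$ for all $n\ge\kappa$ and all $x\ge0$.

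The delicate point, which I expect to be the main obstacle, is the finitely many indices $1\le n<\kappa$ lying in $\operatorname{supp}(\eta)$: for these, $S_n$ does not contain the $\mathcal{OL}$-summand $\xi_\kappa$ and need not itself belong to $\mathcal{OL}$, so a term-by-term ratio bound is unavailable. I would bypass this using monotonicity of partial sums of nonnegative variables: $S_n\le S_\kappa$ almost surely for $n\le\kappa$, whence $\overline{F}_{S_n}(x-1)\le\overline{F}_{S_\kappa}(x-1)\le C_\kappa\,\overline{F}_{S_\kappa}(x)$, the last step applying the previous estimate to $S_\kappa$ itself. Splitting the numerator series accordingly,
\[
\sum_{n=1}^{\infty}\mathbb{P}(\eta=n)\,\overline{F}_{S_n}(x-1)\le C_\kappa\sum_{n\ge\kappa}\mathbb{P}(\eta=n)\,\overline{F}_{S_n}(x)+C_\kappa\,\overline{F}_{S_\kappa}(x),
\]
while the denominator series is at least $\mathbb{P}(\eta=\kappa)\,\overline{F}_{S_\kappa}(x)>0$ since $\kappa\in\operatorname{supp}(\eta)$. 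Dividing yields the uniform bound $C_\kappa\bigl(1+1/\mathbb{P}(\eta=\kappa)\bigr)$, which establishes $F_{S_\eta}\in\mathcal{OL}$.

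I would finally remark that this route controls the low-order summands through the stochastic domination $S_n\le S_\kappa$ rather than through an intrinsic $\mathcal{OL}$-property of each $S_n$. The second and third conditions are exactly what is needed for the complementary, term-by-term route that seeks a uniform estimate $\overline{F}_{S_n}(x-1)\le C\,\overline{F}_{S_n}(x)$ for every $n$ (the second condition handling indices $k\le\kappa$ and the third those exceeding $\kappa$); I would keep the monotonicity argument as the primary device and invoke the comparisons with $\overline{F}_{\xi_\kappa}$ furnished by the second and third conditions only if a conclusion sharper than plain membership were required.
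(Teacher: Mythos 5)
Your argument is correct (up to one trivial slip noted below) and it follows a genuinely different, more economical route than the paper. The paper's proof runs in two stages: it first establishes $F_{S_\kappa}\in\mathcal{OL}$ by induction on convolutions (Lemma 4, built on Lemma 3 --- this is where the second bullet condition is consumed), and then iterates the Embrechts--Goldie inequality (Lemma 1) with the cut point $v=x/2$ to get a bound on $\mathbb{P}(S_{\kappa+k}>x-1)/\mathbb{P}(S_{\kappa+k}>x)$ uniform in $k$ --- this is where the third bullet condition is consumed; the indices $n\le\kappa$ are treated by the same monotonicity device you use. Your conditioning estimate replaces both stages at a single stroke: since $F_{\xi_\kappa}\in\mathcal{OL}$ gives the pointwise bound $\overline{F}_{\xi_\kappa}(t-1)\le C_\kappa\overline{F}_{\xi_\kappa}(t)$ for every real $t$, integrating against ${\rm d}F_Y$ yields $\overline{F}_{\xi_\kappa+Y}(x-1)\le C_\kappa\,\overline{F}_{\xi_\kappa+Y}(x)$ for any independent $Y\geqslant0$, hence $\overline{F}_{S_n}(x-1)\le C_\kappa\,\overline{F}_{S_n}(x)$ for every $n\geqslant\kappa$ with a constant not depending on $n$, and no appeal to Lemmas 1, 3, or 4 is needed. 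A consequence you should state explicitly rather than leave implicit in your closing remark: your proof never invokes the second or third bullet condition, so it establishes a strictly stronger theorem --- for independent nonnegative summands, $F_{\xi_\kappa}\in\mathcal{OL}$ for a single $\kappa\in{\rm supp}(\eta)\setminus\{0\}$ already forces $F_{S_\eta}\in\mathcal{OL}$. In effect you have shown that $\mathcal{OL}$ is closed under convolution with an arbitrary independent nonnegative factor; the paper's Lemma 3 asserts this only under extra hypotheses because its method aims at the sharper constant $\limsup_{x\to\infty}\overline{F}_{X_1}(x-1)/\overline{F}_{X_1}(x)$ (obtained by letting the cut $M\to\infty$), whereas your cruder global supremum $C_\kappa$, though possibly much larger, is all that bare membership in $\mathcal{OL}$ requires.

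The one slip: your numerator series omits the $n=0$ term, which does not vanish when $x\in[0,1)$ and $\mathbb{P}(\eta=0)>0$ (there $\overline{F}_{S_0}(x-1)=1$), so the claimed uniform bound over all $x\geqslant0$ is not quite established as written. This is immaterial: either restrict to $x\geqslant1$, which suffices because membership in $\mathcal{OL}$ is equivalent to $\limsup_{x\rightarrow\infty}\overline{F}_{S_\eta}(x-1)/\overline{F}_{S_\eta}(x)<\infty$, or absorb the $n=0$ term into your monotonicity step via $S_0\le S_\kappa$, noting that $\overline{F}_{S_\kappa}(x-1)\le C_\kappa\,\overline{F}_{S_\kappa}(x)$ holds for all real $x$ by your own estimate applied to $S_\kappa$.
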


Since each d.f. from the class $\mathcal{OL}$ is comparable with
itself, the next assertion follows immediately from Theorem \ref{tt1}.

\begin{cor} \textit{Let $\{\xi_1,\xi_2, \ldots\}$ be independent
nonnegative random variables with common d.f. $F_\xi\in\mathcal{OL}$.
Then the d.f. of random sum $F_{S_\eta} $ is $\mathcal{O}$-exponential
for an arbitrary counting r.v. $\eta$.}
\end{cor}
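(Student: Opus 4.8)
The plan is to verify that the common-distribution hypothesis forces all three bullet conditions of Theorem~\ref{tt1}, after which the conclusion $F_{S_\eta}\in\mathcal{OL}$ is immediate.

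First I would fix the index $\kappa$. Since $\eta$ is a counting r.v., it is nondegenerate at zero, so $\mathbb{P}(\eta=n)>0$ for at least one $n\in\mathbb{N}$; hence $\mathrm{supp}(\eta)\setminus\{0\}$ is nonempty, and I may choose any $\kappa$ in it. Because every $\xi_k$ shares the d.f. $F_\xi$, we have $F_{\xi_\kappa}=F_\xi\in\mathcal{OL}$, which is exactly the first bullet.

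Next, for the second bullet, take any $k\in\mathrm{supp}(\eta)$ with $k\leqslant\kappa$. Again $F_{\xi_k}=F_\xi\in\mathcal{OL}$, so the second alternative holds automatically and the vanishing-ratio condition need never be invoked. For the third bullet, the identical distributions collapse the double supremum:
\[
\sup_{x\geqslant0}\sup_{k\geqslant1}\frac{\overline{F}_{\xi_{\kappa+k}}(x-1)}{\overline{F}_{\xi_{\kappa+k}}(x)}=\sup_{x\geqslant0}\frac{\overline{F}_\xi(x-1)}{\overline{F}_\xi(x)},
\]
and the right-hand side is finite precisely because $F_\xi\in\mathcal{OL}$, by the equivalent characterization of the class recorded just after~\eqref{ii}.

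With all three bullets verified, Theorem~\ref{tt1} applies directly and yields $F_{S_\eta}\in\mathcal{OL}$. There is no genuine obstacle here; the only point requiring a moment's care is the very first step, namely guaranteeing that a legitimate $\kappa\in\mathrm{supp}(\eta)\setminus\{0\}$ exists, which is exactly where the nondegeneracy of the counting variable $\eta$ is used.
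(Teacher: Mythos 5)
Your proposal is correct and follows exactly the paper's route: the paper derives the corollary immediately from Theorem~\ref{tt1} by noting that a common d.f. $F_\xi\in\mathcal{OL}$ trivially satisfies all three bullet conditions, with the third reducing to the characterization $\sup_{x\geqslant 0}\overline{F}_\xi(x-1)/\overline{F}_\xi(x)<\infty$ recorded after~\eqref{ii}. Your write-up merely makes explicit the verification the paper leaves implicit, including the existence of $\kappa$ from the nondegeneracy of $\eta$.
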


Our second main assertion is dealt with counting r.v.s having finite support.

\begin{thm}\label{tt2} \textit{Let $\{\xi_1,\xi_2, \ldots, \xi_D \}$,
$D\in\mathbb{N}$, be independent nonnegative random variables with
d.f.s $\{F_{\xi_1},F_{\xi_2},\ldots F_{\xi_D}\}$, and let $\eta$ be a
counting r.v. independent of $\{\xi_1,\break\xi_2, \ldots, \xi_D \}$. Then
$F_{S_\eta} \in\mathcal{OL}$ under the following three conditions.}

$\bullet$ \textit{$\mathbb{P} (\eta\leqslant D)=1$.}

$\bullet$ \textit{For some $\kappa\in{\rm{supp}}(\eta)\setminus
\{0\}$, $F_{\xi_\kappa} \in\mathcal{OL}$.}

$\bullet$ \textit{For each $k \in\{1,2,\dots, D\}$, either
$\lim\limits_{x\rightarrow\infty} \frac{\overline{F}_{\xi
_k}(x)}{\overline{F}_{\xi_\kappa}(x)}=0$ or $F_{\xi_k} \in\mathcal
{OL}$. }
\end{thm}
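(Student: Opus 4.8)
The plan is to collapse the random sum onto its heaviest convolution power and then to exploit a closure property of $\mathcal{OL}$ under convolution with a nonnegative summand. Writing $p_n=\mathbb{P}(\eta=n)$ and $F_{S_n}=F_{\xi_1}*\cdots*F_{\xi_n}$, condition~1 makes the tail a finite sum,
\[
\overline{F}_{S_\eta}(x)=\sum_{n=1}^{D}p_n\,\overline{F}_{S_n}(x),\qquad x\geqslant0,
\]
the $n=0$ term vanishing for $x\geqslant0$. Since every $\xi_k$ is nonnegative, the tails are nondecreasing in $n$, i.e. $\overline{F}_{S_n}(x)\leqslant\overline{F}_{S_{n+1}}(x)$ for all $x\geqslant0$; hence, with $N:=\max({\rm supp}(\eta))$, one has $\overline{F}_{S_n}(x)\leqslant\overline{F}_{S_N}(x)$ for every $n\leqslant N$, which sandwiches
\[
p_N\,\overline{F}_{S_N}(x)\leqslant\overline{F}_{S_\eta}(x)\leqslant\overline{F}_{S_N}(x).
\]
Consequently $\overline{F}_{S_\eta}(x-1)/\overline{F}_{S_\eta}(x)\leqslant p_N^{-1}\,\overline{F}_{S_N}(x-1)/\overline{F}_{S_N}(x)$, and by criterion~(\ref{ii}) it is enough to prove $F_{S_N}\in\mathcal{OL}$.

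Because $\kappa\in{\rm supp}(\eta)$ forces $\kappa\leqslant N$, the variable $\xi_\kappa$ is one of the summands of $S_N$, so I can write $S_N=\xi_\kappa+Z$ with $Z=\sum_{k\leqslant N,\,k\neq\kappa}\xi_k\geqslant0$ independent of $\xi_\kappa$. The heart of the argument is then the following closure property: \emph{if $X\geqslant0$ has $F_X\in\mathcal{OL}$ and $Z\geqslant0$ is independent of $X$, then $F_{X+Z}\in\mathcal{OL}$.} Conditioning on $Z$ gives $\overline{F}_{X+Z}(x)=\int_{[0,\infty)}\overline{F}_X(x-y)\,{\rm d}F_Z(y)$. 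Put $c:=\sup_{t\geqslant0}\overline{F}_X(t-1)/\overline{F}_X(t)$, which is finite by the equivalent supremum form of criterion~(\ref{ii}) and satisfies $c\geqslant1$. The pivotal observation is the pointwise bound $\overline{F}_X(t-1)\leqslant c\,\overline{F}_X(t)$ valid for \emph{every} real $t$: for $t\geqslant0$ this is the definition of $c$, while for $t<0$ both tails equal $1$ since $X\geqslant0$. Applying it with $t=x-y$ under the integral sign yields $\overline{F}_{X+Z}(x-1)\leqslant c\,\overline{F}_{X+Z}(x)$, and therefore $F_{X+Z}\in\mathcal{OL}$.

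Taking $X=\xi_\kappa$ (which lies in $\mathcal{OL}$ by condition~2) and $Z=S_N-\xi_\kappa$, the closure property gives $F_{S_N}\in\mathcal{OL}$, and the sandwich of the first paragraph then yields $F_{S_\eta}\in\mathcal{OL}$, as required. The step I expect to require the most care is the pointwise bound: one must upgrade the $\limsup$-definition of $\mathcal{OL}$ to a supremum genuinely controlled over all $x\geqslant0$ (this uses $\overline{F}_X(x)>0$ for every $x$, so that the ratio stays bounded on compact sets) and then extend it to the arguments $t<0$ produced by the convolution, where the nonnegativity of $X$ is exactly what is needed. It is worth noting that this route invokes only conditions~1 and~2; condition~3 is the natural hypothesis for an alternative, term-by-term estimate of $\overline{F}_{S_\eta}(x-1)/\overline{F}_{S_\eta}(x)$ that does not first reduce to the single heaviest convolution $S_N$.
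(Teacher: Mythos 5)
Your proof is correct, and it takes a genuinely different route from the paper's. The paper proves Theorem~\ref{tt2} by first invoking Lemma~\ref{aul4} (an induction built on Lemma~\ref{aul3}, which in turn rests on the Embrechts--Goldie estimate of Lemma~\ref{aul1}) to conclude that $F_{S_n}\in\mathcal{OL}$ for every $\kappa\leqslant n\leqslant D$ --- this is exactly where the third condition is consumed --- and then bounds $\overline{F}_{S_\eta}(x-1)/\overline{F}_{S_\eta}(x)$ by the mediant inequality $\frac{a_1+\cdots+a_n}{b_1+\cdots+b_n}\leqslant\max_i\frac{a_i}{b_i}$, grouping the terms with $n\leqslant\kappa$ against $S_\kappa$ and treating each term with $\kappa<n\leqslant D$ separately. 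You instead collapse the whole random sum onto the single heaviest convolution $S_N$, $N=\max\mathrm{supp}(\eta)$, via the sandwich $p_N\overline{F}_{S_N}\leqslant\overline{F}_{S_\eta}\leqslant\overline{F}_{S_N}$ (valid by nonnegativity of the summands), and then establish $F_{S_N}\in\mathcal{OL}$ through a closure property that is strictly stronger than Lemma~\ref{aul3} in the nonnegative setting: if $X\geqslant0$ with $F_X\in\mathcal{OL}$ and $Z\geqslant0$ is independent of $X$, then $F_{X+Z}\in\mathcal{OL}$, with \emph{no} hypothesis on $Z$ at all. Your verification of that property is sound: finiteness of $c=\sup_{t\geqslant0}\overline{F}_X(t-1)/\overline{F}_X(t)$ follows from the finite $\limsup$ together with $\overline{F}_X>0$ (boundedness on compacts), $c\geqslant1$ extends the pointwise bound $\overline{F}_X(t-1)\leqslant c\,\overline{F}_X(t)$ to $t<0$ by nonnegativity of $X$, and integrating this bound against ${\rm d}F_Z$ transfers it to the convolution. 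The payoff of your route is that it is more elementary (no induction, no Lemma~\ref{aul1}) and proves a stronger statement: as you note, conditions~1 and~2 alone suffice, so the theorem's condition~3 is redundant for nonnegative summands and finitely supported $\eta$. What the paper's route buys is scalability: it is the argument that extends to Theorems~\ref{tt1} and~\ref{tt3}, where $\eta$ has unbounded support, there is no maximal $S_N$ to collapse onto, and uniform-in-$k$ control of the ratios (the analogues of condition~3) becomes genuinely necessary.
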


Our last main assertion describes the case where the tails of d.f.s
$F_{\xi_k}$ are comparable at $x$ and $x-1$ asymptotically and
uniformly with respect to large indices $k$. In this case, conditions
are more restrictive for a counting r.v.

\begin{thm}\label{tt3}
\textit{Let $\{\xi_1,\xi_2, \ldots\}$ be independent nonnegative
random variables with d.f.s $\{F_{\xi_1},F_{\xi_2},\ldots\}$, and let
$\eta$ be a counting r.v. d.f. $F_\eta$ independent of $\{\xi_1,\xi_2,
\ldots\}$. Then $F_{S_\eta} \in\mathcal{OL}$ if the following five
conditions are satisfied.}

$\bullet$ {For some $\kappa\in{\rm{supp}} (\eta)\setminus\{0\}
$, $F_{\xi_\kappa} \in\mathcal{OL}$}.

$\bullet$ \textit{For each $k \in{\rm supp}(\eta)$, $k\leqslant
\kappa$, either $\lim\limits_{x\rightarrow\infty} \frac{\overline
{F}_{\xi_k}(x)}{\overline{F}_{\xi_\kappa}(x)}=0$ or $F_{\xi_k} \in
\mathcal{OL}$.}

$\bullet$ $\limsup\limits_{x\rightarrow\infty}\sup\limits
_{k\geqslant1}\frac{\overline{F}_{\xi_{\kappa+k}}(x-1)}{\overline
{F}_{\xi_{\kappa+k}}(x)}<\infty$.

$\bullet$ $\limsup\limits_{k\rightarrow\infty}\frac{1}{k}\sum_{l=1}^{k}\sup\limits_{x\geqslant0}
(\overline{F}_{\xi_{\kappa+l}}(x-1)-\overline{F}_{\xi_{\kappa
+l}}(x))<1$.

$\bullet$ For each $\delta\in(0,1)$, $\overline{F}_\eta(\delta
x)=O(\sqrt{x}\,\overline{F}_{\xi_\kappa}(x))$.
\end{thm}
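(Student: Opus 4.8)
The plan is to verify the membership criterion \eqref{ii}, i.e.\ $\sup_{x\geqslant 0}\overline{F}_{S_\eta}(x-1)/\overline{F}_{S_\eta}(x)<\infty$. The engine is a closure lemma that I would prove first: \emph{if $X\geqslant 0$ has $F_X\in\mathcal{OL}$ and $Y\geqslant 0$ is independent of $X$, then $F_{X+Y}\in\mathcal{OL}$, and in fact $\overline{F}_{X+Y}(x-1)\leqslant A_X\,\overline{F}_{X+Y}(x)$ for every $x$, where $A_X:=\sup_{t\geqslant 0}\overline{F}_X(t-1)/\overline{F}_X(t)<\infty$.} The key point is that nonnegativity of $X$ upgrades the defining inequality to a pointwise estimate on all of $\mathbb{R}$, namely $\overline{F}_X(s)\leqslant A_X\,\overline{F}_X(s+1)$ for every $s$ (for $s<0$ both tails equal $1$ and $A_X\geqslant 1$, so the boundary region is harmless). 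Conditioning on $Y$ and inserting this bound under the integral gives $\overline{F}_{X+Y}(x-1)=\int\overline{F}_X(x-1-y)\,\mathrm{d}F_Y(y)\leqslant A_X\int\overline{F}_X(x-y)\,\mathrm{d}F_Y(y)=A_X\,\overline{F}_{X+Y}(x)$.

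Next I would exploit that the distinguished summand $\xi_\kappa$ sits at a supported index. For each $n\geqslant\kappa$ I write $S_n=\xi_\kappa+(S_n-\xi_\kappa)$ with $S_n-\xi_\kappa\geqslant 0$ independent of $\xi_\kappa$; the lemma then gives $\overline{F}_{S_n}(x-1)\leqslant A\,\overline{F}_{S_n}(x)$ with the \emph{single} constant $A:=A_{\xi_\kappa}$, uniformly in $n$. For $1\leqslant n<\kappa$ I use $S_n\leqslant S_\kappa$ almost surely together with $F_{S_\kappa}\in\mathcal{OL}$ (the lemma again, applied to $S_\kappa=\xi_\kappa+(S_\kappa-\xi_\kappa)$) to obtain $\overline{F}_{S_n}(x-1)\leqslant\overline{F}_{S_\kappa}(x-1)\leqslant A\,\overline{F}_{S_\kappa}(x)$. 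Summing against $p_n=\mathbb{P}(\eta=n)$ and discarding the vacuous $n=0$ term,
\[
\overline{F}_{S_\eta}(x-1)\leqslant A\Big(\sum_{n<\kappa}p_n\Big)\overline{F}_{S_\kappa}(x)+A\sum_{n\geqslant\kappa}p_n\,\overline{F}_{S_n}(x)\leqslant A\,\overline{F}_{S_\kappa}(x)+A\,\overline{F}_{S_\eta}(x).
\]
Since $\overline{F}_{S_\eta}(x)\geqslant p_\kappa\overline{F}_{S_\kappa}(x)$, this yields $\overline{F}_{S_\eta}(x-1)\leqslant A(1+1/p_\kappa)\,\overline{F}_{S_\eta}(x)$, hence $F_{S_\eta}\in\mathcal{OL}$.

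The step I would watch most carefully is the claim that one constant $A$, depending only on $\xi_\kappa$, serves uniformly for all $n$: this is what makes the summation collapse, and it rests on pushing the entire random-length remainder $S_n-\xi_\kappa$ into the ``noise'' $Y$ of the lemma rather than controlling its summands one by one. I should emphasize that this route uses only the first hypothesis. The remaining conditions become the operative ones along the alternative, more hands-on route in which one tries to certify $F_{S_n}\in\mathcal{OL}$ for large $n$ directly from the weaker (limsup) third hypothesis. There the genuine obstacle is the block $\sum_{n>\delta x}p_n\,\mathbb{P}(x-1<S_n\leqslant x)$: one bounds $\mathbb{P}(x-1<S_n\leqslant x)$ by the concentration function $Q(S_n;1)=\sup_y\mathbb{P}(y-1<S_n\leqslant y)$, uses $Q(S_n;1)=O(1/\sqrt{n})$ via the Kolmogorov--Rogozin inequality (the fourth hypothesis forces $\sum_l(1-Q(\xi_{\kappa+l};1))$ to grow linearly), and matches the resulting $1/\sqrt{x}$ against the factor $\sqrt{x}$ in the fifth hypothesis $\overline{F}_\eta(\delta x)=O(\sqrt{x}\,\overline{F}_{\xi_\kappa}(x))$. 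The delicate point in that route is the complementary range $n\leqslant\delta x$ with $n$ still large, where the concentration bound alone is too weak and one must nonetheless exhibit the tail comparison.
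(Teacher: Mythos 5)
Your proposal is correct, and it takes a genuinely different --- and far shorter --- route than the paper. Your closure lemma is sound: for nonnegative $X$ with $F_X\in\mathcal{OL}$ the constant $A_X=\sup_{t\geqslant0}\overline{F}_X(t-1)/\overline{F}_X(t)$ is finite (this is the paper's own equivalent formulation of membership in $\mathcal{OL}$), and the bound $\overline{F}_X(s-1)\leqslant A_X\overline{F}_X(s)$ then holds for \emph{every} real $s$; one small imprecision is your justification of the boundary region, since for $s-1\in[-1,0)$ the tail $\overline{F}_X(s)$ need not equal $1$, but there the bound is simply the definition of $A_X$ (the argument $s$ is already nonnegative), so nothing is lost. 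Integrating the pointwise bound against $\mathrm{d}F_Y$ gives, for all $x$,
\begin{equation*}
\overline{F}_{X+Y}(x-1)=\int\overline{F}_X(x-1-y)\,\mathrm{d}F_Y(y)
\leqslant A_X\int\overline{F}_X(x-y)\,\mathrm{d}F_Y(y)=A_X\,\overline{F}_{X+Y}(x),
\end{equation*}
with a constant depending only on $X$; taking $X=\xi_\kappa$, $Y=S_n-\xi_\kappa$ makes this uniform over all $n\geqslant\kappa$, and your treatment of $1\leqslant n<\kappa$ (monotonicity $S_n\leqslant S_\kappa$) and of the weighted summation is also correct, yielding the explicit bound $A_{\xi_\kappa}(1+1/\mathbb{P}(\eta=\kappa))$ on the limsup in \eqref{ii}. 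The paper proceeds very differently: it first proves $F_{S_\kappa}\in\mathcal{OL}$ by induction (its Lemmas \ref{aul3}--\ref{aul4}, consuming the first two conditions), then splits $\mathbb{P}(S_\eta>x-1)$ into four ranges of the index, controls moderate indices by iterating the Embrechts--Goldie inequality (Lemma \ref{aul1}) with shrinking cut-points (consuming the third condition), and controls large indices through the Kolmogorov--Rogozin concentration inequality (Lemma \ref{aul2}, consuming the fourth condition) matched against the fifth condition on $\overline{F}_\eta$. The striking consequence of your route --- which you note yourself --- is that it uses only the first hypothesis together with nonnegativity and independence, so it shows that conditions two through five are redundant for the stated conclusion, and it simultaneously subsumes Theorems \ref{tt1} and \ref{tt2} and the Corollary. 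The reason the paper needs its extra machinery is visible in its Lemma \ref{aul1}: there the term involving the tail ratio of the \emph{second} distribution never disappears and must therefore be hypothesized away, whereas your argument eliminates that term at the source by pushing the entire remainder under the integral as noise while the single distinguished $\mathcal{OL}$ summand supplies a ratio bound valid on all of $\mathbb{R}$.
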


\vspace*{-6pt}\section{Auxiliary lemmas}
\label{au}

In this section, we present all assertions that we use in the proofs of
our main results. We present some of auxiliary results with proofs.
The first assertion can be found in \cite{em+gol-1980} (see
Eq.~(2.12)).\vadjust{\eject}

\begingroup
\abovedisplayskip=5pt
\belowdisplayskip=5pt
\begin{lemma}\label{aul1} \textit{
Let $F$ and $G$ be two d.f.s satisfying $\overline{F}(x)>0$, $\overline
{G}(x)>0$, $x \in\mathbb{R}$. Then
\begin{eqnarray*}
\frac{\overline{F*G}(x-t)}{\overline{F*G}(x)} \leqslant\max\bigg\{
\sup\limits_{y \geqslant v} \frac{\overline{F}(y-t)}{\overline
{F}(y)},\sup\limits_{y \geqslant x-v+t} \frac{\overline
{G}(y-t)}{\overline{G}(y)}\bigg\}
\end{eqnarray*}
for all $x \in\mathbb{R}$, $v \in\mathbb{R}$, and $t>0$.}
\end{lemma}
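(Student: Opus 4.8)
The plan is to give a direct probabilistic argument. Let $X$ and $Y$ be independent random variables with distribution functions $F$ and $G$, so that $\overline{F*G}(x)=\mathbb{P}(X+Y>x)$, and abbreviate
\[
A=\sup_{y\geqslant v}\frac{\overline{F}(y-t)}{\overline{F}(y)},\qquad
B=\sup_{y\geqslant x-v+t}\frac{\overline{G}(y-t)}{\overline{G}(y)}.
\]
The positivity assumptions guarantee that all the tail ratios below are well defined and that $\overline{F*G}(x)>0$, so it suffices to prove $\overline{F*G}(x-t)\leqslant\max\{A,B\}\,\overline{F*G}(x)$. First I would split the numerator event according to the position of $Y$ relative to the level $x-v$:
\[
\overline{F*G}(x-t)=\mathbb{P}(X+Y>x-t,\,Y\leqslant x-v)+\mathbb{P}(X+Y>x-t,\,Y>x-v)=:J_1+J_2 .
\]
The point of this threshold is that on the first event the $F$-argument is large, so the factor $A$ controls it, whereas on the second event $Y$ itself is large, so after a shift the factor $B$ applies.

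For $J_1$ I would condition on $Y=y$, writing $J_1=\int_{(-\infty,x-v]}\overline{F}(x-t-y)\,\mathrm{d}G(y)$. Since $y\leqslant x-v$ forces $x-y\geqslant v$, the definition of $A$ gives $\overline{F}((x-y)-t)\leqslant A\,\overline{F}(x-y)$, hence
\[
J_1\leqslant A\int_{(-\infty,x-v]}\overline{F}(x-y)\,\mathrm{d}G(y)=A\,\mathbb{P}(X+Y>x,\,Y\leqslant x-v).
\]

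The decisive step is $J_2$, and this is where I expect the main obstacle: the shift sits in the $F$-argument, yet I need to manufacture a $G$-ratio. I would therefore condition on $X=u$ instead, obtaining $J_2=\int_{\mathbb{R}}\overline{G}\bigl(\max(x-v,\,x-t-u)\bigr)\,\mathrm{d}F(u)$. Setting $w=\max(x-v,\,x-t-u)$, the elementary realignment identity $\max(x-v+t,\,x-u)=w+t$ together with $w\geqslant x-v$, i.e. $w+t\geqslant x-v+t$, lets the definition of $B$ act: $\overline{G}(w)=\overline{G}((w+t)-t)\leqslant B\,\overline{G}(w+t)$. Integrating this pointwise bound over $\mathrm{d}F(u)$ yields
\[
J_2\leqslant B\int_{\mathbb{R}}\overline{G}\bigl(\max(x-v+t,\,x-u)\bigr)\,\mathrm{d}F(u)=B\,\mathbb{P}(X+Y>x,\,Y>x-v+t).
\]

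Finally I would combine the two estimates. Because $t>0$, the events $\{Y\leqslant x-v\}$ and $\{Y>x-v+t\}$ are disjoint, so their intersections with $\{X+Y>x\}$ satisfy
\[
\mathbb{P}(X+Y>x,\,Y\leqslant x-v)+\mathbb{P}(X+Y>x,\,Y>x-v+t)\leqslant\mathbb{P}(X+Y>x)=\overline{F*G}(x).
\]
Replacing both coefficients $A$ and $B$ by $\max\{A,B\}$ and adding $J_1$ and $J_2$ then gives $\overline{F*G}(x-t)\leqslant\max\{A,B\}\,\overline{F*G}(x)$, which is the assertion after dividing by $\overline{F*G}(x)>0$. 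The only genuinely non-routine ingredient is the realignment identity used for $J_2$; everything else reduces to conditioning and the monotonicity of the tails.
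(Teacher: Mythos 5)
Your proof is correct. Note, however, that the paper itself contains no proof of Lemma~\ref{aul1}: it is quoted from Embrechts and Goldie \cite{em+gol-1980} (their Eq.~(2.12)), so your argument is not competing with an internal proof but stands as a self-contained replacement for the citation. The substance checks out. For $J_1$, conditioning on $Y=y\leqslant x-v$ forces $x-y\geqslant v$, so the supremum defining $A$ applies pointwise under the integral, giving $J_1\leqslant A\,\mathbb{P}(X+Y>x,\,Y\leqslant x-v)$. For $J_2$, conditioning on $X=u$ produces the integrand $\overline{G}(w)$ with $w=\max(x-v,\,x-t-u)$; your realignment identity $w+t=\max(x-v+t,\,x-u)$ is just $\max(a,b)+t=\max(a+t,b+t)$, and since $w+t\geqslant x-v+t$ the supremum defining $B$ gives $\overline{G}(w)\leqslant B\,\overline{G}(w+t)$, whose $\mathrm{d}F$-integral is exactly $\mathbb{P}(X+Y>x,\,Y>x-v+t)$. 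The closing step is also sound: because $t>0$, the events $\{Y\leqslant x-v\}$ and $\{Y>x-v+t\}$ are disjoint, so the two right-hand probabilities sum to at most $\overline{F*G}(x)$, and the standing positivity assumptions guarantee $\overline{F*G}(x)>0$ so that division is legitimate (if $A$ or $B$ is infinite the inequality is trivial, so finiteness need not be assumed). Compared with the classical route behind the cited equation, which splits the convolution tail into three regions (both variables small, $X$ large, $Y$ large) and estimates the cross and corner terms separately, your two-region split with the max-trick absorbing the corner term is a mild streamlining; what it buys is a short, purely probabilistic derivation that the reader can verify without consulting \cite{em+gol-1980}.
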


The following assertion is the well-known Kolmogorov--Rogozin
inequality for concentration functions. Recall that the L\'{e}vy
concentration function or simply concentration function of a r.v.\ $X$
is the function
\[
Q_X(\lambda)=\sup\limits_{x\in\mathbb{R}}\mathbb{P}(x\leqslant
X\leqslant x+\lambda),\quad\lambda\in[0,\infty).
\]

The proof of the next lemma can be found in \cite{petrov} (Theorem 2.15).

\begin{lemma}\label{aul2} \textit{Let $ X_1,X_2,\ldots, X_n$ be
independent r.v.s, and let $Z_n=\sum_{k=1}^{n}X_k$. Then, for all $n\in
\mathbb{N}$,
\[
Q_{Z_n}(\lambda)\leqslant A\lambda\Bigg\{\sum\limits_{k=1}^{n}\lambda
_k^2\big(1-Q_{X_k}(\lambda_k)\big)\Bigg\}^{-1/2},
\]
where $A$ is an absolute constant, and $0<\lambda_k\leqslant\lambda$
for each $k\in\{1,2,\ldots,n\}$.}
\end{lemma}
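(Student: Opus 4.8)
This is the classical Kolmogorov--Rogozin inequality, and I would prove it by Fourier analysis. Write $f_k(t)=\mathbb{E}\,\mathrm{e}^{itX_k}$; by independence the characteristic function of $Z_n$ is $\prod_{k=1}^n f_k(t)$, and I set $\phi_k(t)=1-|f_k(t)|^2$. The whole strategy is to bound $Q_{Z_n}(\lambda)$ by an integral of $\bigl|\prod_k f_k\bigr|$ over $|t|\leqslant 1/\lambda$, to replace that product by $\exp\bigl(-\tfrac12\sum_k\phi_k(t)\bigr)$, and to show that $\sum_k\phi_k$ grows quadratically in $t$ with coefficient comparable to $B:=\sum_{k=1}^n\lambda_k^2\bigl(1-Q_{X_k}(\lambda_k)\bigr)$; the resulting Gaussian-type integral then produces the factor $B^{-1/2}$.

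First I would invoke Esseen's smoothing inequality: for any law with characteristic function $f$ one has $Q(\lambda)\leqslant A_1\lambda\int_{-1/\lambda}^{1/\lambda}|f(t)|\,\mathrm{d}t$ with an absolute constant $A_1$, proved by Fourier inversion against a fixed nonnegative kernel whose transform dominates the indicator of an interval of length $\lambda$. Applied to $Z_n$ this gives $Q_{Z_n}(\lambda)\leqslant A_1\lambda\int_{-1/\lambda}^{1/\lambda}\prod_k|f_k(t)|\,\mathrm{d}t$. Since $0\leqslant|f_k|\leqslant1$ and $\log u\leqslant-\tfrac12(1-u^2)$ on $(0,1]$, the product is at most $\exp\bigl(-\tfrac12\sum_k\phi_k(t)\bigr)$, so everything reduces to a lower bound on $\sum_k\phi_k$.

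For the individual terms I would symmetrize. If $X_k'$ is an independent copy of $X_k$ and $\widetilde X_k=X_k-X_k'$ has (symmetric) law $G_k$, then $\phi_k(t)=\int(1-\cos ts)\,\mathrm{d}G_k(s)$. A one-line computation with the concentration function yields $\mathbb{P}(|\widetilde X_k|>\lambda_k/2)\geqslant 1-Q_{X_k}(\lambda_k)$, i.e. $G_k$ carries mass at least $1-Q_{X_k}(\lambda_k)$ at scale $\lambda_k$. Averaging in $t$ over $[0,1/\lambda_k]$ and using $\tfrac1T\int_0^T(1-\cos ts)\,\mathrm{d}t=1-\tfrac{\sin Ts}{Ts}$, together with the fact that $1-\tfrac{\sin x}{x}$ stays bounded away from $0$ once $|x|\geqslant\tfrac12$, upgrades this to the oscillation-proof integrated bound $\int_0^{1/\lambda_k}\phi_k(t)\,\mathrm{d}t\geqslant c\,\lambda_k^{-1}\bigl(1-Q_{X_k}(\lambda_k)\bigr)$.

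The main obstacle is passing from these integrated bounds to the quadratic growth of $\sum_k\phi_k$ that delivers $B^{-1/2}$. One cannot simply assert the pointwise estimate $\phi_k(t)\geqslant c\,\lambda_k^2t^2(1-Q_{X_k}(\lambda_k))$ on the whole range: the oscillation of $1-\cos$ makes it fail at isolated points (for $X_k$ supported on $\{0,2\pi\lambda_k\}$ one finds $\phi_k(1/\lambda_k)=0$ while $1-Q_{X_k}(\lambda_k)=\tfrac12$). Instead I would control $\int_{-1/\lambda}^{1/\lambda}\exp\bigl(-\tfrac12\sum_k\phi_k\bigr)\,\mathrm{d}t$ directly: the propagation inequality $\phi_k(nt)\leqslant n^2\phi_k(t)$ (from $1-\cos n\theta\leqslant n^2(1-\cos\theta)$) turns the integrated lower bound into quadratic growth of $\sum_k\phi_k$ away from its finitely many near-zeros in $[-1/\lambda,1/\lambda]$, with curvature of order $B$ at each; the integral is then a bounded number of Gaussian contributions each of size $O(B^{-1/2})$. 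Combining with the Esseen bound gives $Q_{Z_n}(\lambda)\leqslant A\lambda\,B^{-1/2}$, which is the assertion. I expect this last bookkeeping — bounding the number of near-zeros using $\lambda_k\leqslant\lambda$ and verifying the curvature lower bound uniformly — to be the genuinely delicate part.
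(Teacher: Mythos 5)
Your first half is the classical Esseen opening, and it is correct: Esseen's smoothing inequality, the bound $\log u\leqslant-\frac12(1-u^2)$ on $(0,1]$, the symmetrization identity $1-|f_k(t)|^2=\mathbb{E}\big(1-\cos(t\widetilde X_k)\big)$, the estimate $\mathbb{P}(|\widetilde X_k|>\lambda_k/2)\geqslant 1-Q_{X_k}(\lambda_k)$, and the averaged lower bound via $1-\frac{\sin(Ts)}{Ts}$ all check out. For calibration: the paper itself does not prove this lemma at all --- it is the classical Kolmogorov--Rogozin inequality, quoted from Petrov's book (Theorem 2.15 there), with the original proofs due to Rogozin (combinatorial, via Sperner-type arguments) and Esseen (Fourier-analytic). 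So there is no internal proof to compare against; your attempt must stand on its own, and you have correctly identified that everything hinges on the last step.

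That last step is where the genuine gap lies, and your plan for it fails as stated, not merely in bookkeeping. Take $X_1,\ldots,X_n$ i.i.d.\ with symmetrization $\widetilde X_k$ supported on $\{0,\pm h\}$, where $h=2\pi N\lambda$ with $N$ a large integer, and $\lambda_k=\lambda$ for all $k$. Then $\sum_k\phi_k(t)=cn(1-\cos ht)$ vanishes at the $N+1$ points $t=2\pi j/h$, $0\leqslant j\leqslant N$, of $[0,1/\lambda]$: the number of near-zeros in the integration window is \emph{not} bounded by an absolute constant (it grows with $h/\lambda$), and the curvature at each interior zero is of order $nh^2$, not of order $B\asymp n\lambda^2$ as you claim. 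With your accounting --- boundedly many windows, each contributing $O(B^{-1/2})$ --- this example would yield $N\cdot B^{-1/2}\to\infty$ as $h\to\infty$; what actually saves the inequality is the count--width tradeoff you do not track: roughly $h/\lambda$ windows each of width about $1/(h\sqrt{n})$, whose total is about $1/(\lambda\sqrt{n})\asymp B^{-1/2}$ uniformly in $h$. Note also that the propagation inequality $\phi_k(nt)\leqslant n^2\phi_k(t)$ transfers information only along integer multiples through the origin; it says nothing about the local structure of $\sum_k\phi_k$ near an interior zero $t_0\neq0$, so it cannot supply the curvature bound your plan needs there. The classical proofs sidestep pointwise lower bounds on $\sum_k\phi_k$ entirely: Esseen stays with integrated quantities (for instance weighted integrals such as $\int\phi(t)t^{-2}\,\mathrm{d}t$, or Cauchy--Schwarz combined with the observation that $\prod_k|f_k(t)|^2\geqslant0$ is itself the characteristic function of the symmetrized sum, which reduces matters to a Fej\'er-kernel computation). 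To close the argument you should either carry out a genuinely uniform count--width analysis (delicate, since different $\phi_k$ have incommensurable periods) or, better, follow the concluding lemma in Esseen's paper or in the proof of Theorem 2.15 of Petrov's book cited here.
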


The following assertion describes sufficient conditions under which the
d.f. of two independent r.v.s belongs to the class $\mathcal{OL}$.

\begin{lemma}\label{aul3} \textit{Let $X_1$ and $X_2$ be independent
r.v.s with d.f.s $F_{X_1}$ and $F_{X_2}$, respectively. Then the d.f.
$F_{X_1}*F_{X_2}$ of the sum $X_1+X_2$ is $\mathcal{O}$-exponential if
$F_{X_1}\in\mathcal{OL}$ and one of the following two conditions
holds:
\begin{eqnarray}
&\bullet&\lim\limits_{x\rightarrow\infty}\frac{\overline
{F}_{X_2}(x)}{\overline{F}_{X_1}(x)}=0\label{a1},\\
&\bullet& F_{X_2}\in
\mathcal{OL}.\nonumber
\end{eqnarray}
}
\end{lemma}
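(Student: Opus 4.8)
The plan is to verify the convenient characterisation~\eqref{ii}: a d.f.\ $H$ with $\overline H>0$ everywhere lies in $\mathcal{OL}$ precisely when $\limsup_{x\to\infty}\overline H(x-1)/\overline H(x)<\infty$, equivalently $\sup_{x\geqslant0}\overline H(x-1)/\overline H(x)<\infty$. Writing $F=F_{X_1}$, $G=F_{X_2}$ and $H=F*G$, I would first record that $\overline H(x)>0$ for every $x$: since $F\in\mathcal{OL}$ forces $\overline F>0$ on all of $\mathbb R$, picking $a_0$ with $\overline G(a_0)>0$ gives, by independence, $\overline H(x)=\mathbb P(X_1+X_2>x)\geqslant\overline F(x-a_0)\,\overline G(a_0)>0$. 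Set $M:=\sup_{t\geqslant0}\overline F(t-1)/\overline F(t)$, which is finite because $F\in\mathcal{OL}$.

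In the case $F_{X_2}\in\mathcal{OL}$, both $\overline F$ and $\overline G$ are strictly positive on $\mathbb R$, so Lemma~\ref{aul1} applies with $t=1$ and $v=0$:
\[
\frac{\overline H(x-1)}{\overline H(x)}\leqslant\max\Big\{\sup_{y\geqslant0}\frac{\overline F(y-1)}{\overline F(y)},\ \sup_{y\geqslant x+1}\frac{\overline G(y-1)}{\overline G(y)}\Big\}\leqslant\max\{M,M_G\},
\]
where $M_G:=\sup_{y\geqslant0}\overline G(y-1)/\overline G(y)<\infty$ because $G\in\mathcal{OL}$. Taking $\limsup_{x\to\infty}$ gives $H\in\mathcal{OL}$ at once.

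In the case $\overline F_{X_2}(x)=o(\overline F_{X_1}(x))$ the situation is more delicate, since now $G$ need not belong to $\mathcal{OL}$ (indeed $\overline G$ may even vanish on a half-line), so Lemma~\ref{aul1} is unavailable. Instead I would work directly with the convolution-tail formula $\overline H(x)=\int_{\mathbb R}\overline F(x-y)\,\mathrm{d}G(y)$ and split the integral for $\overline H(x-1)$ at the diagonal shift $y=x-1$. For $y\leqslant x-1$ one has $x-y\geqslant1$, whence $\overline F(x-1-y)\leqslant M\,\overline F(x-y)$; for $y>x-1$ one simply bounds $\overline F(x-1-y)\leqslant1$. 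This yields
\[
\overline H(x-1)\leqslant M\int_{\mathbb R}\overline F(x-y)\,\mathrm{d}G(y)+\mathbb P(X_2>x-1)=M\,\overline H(x)+\overline G(x-1).
\]
It remains to show the error term is negligible against $\overline H(x)$. Choosing $a_0<a_1$ with $p:=\mathbb P(a_0<X_2\leqslant a_1)>0$ gives $\overline H(x)\geqslant\overline F(x-a_0)\,p\geqslant c_0\,\overline F(x)$ for large $x$, the last step using $\liminf_{x\to\infty}\overline F(x-a_0)/\overline F(x)>0$ (a consequence of $F\in\mathcal{OL}$). Combining with the factorisation $\overline G(x-1)=\big(\overline G(x-1)/\overline F(x-1)\big)\,\overline F(x-1)$, where $\overline G(x-1)/\overline F(x-1)\to0$ by the domination hypothesis and $\overline F(x-1)/\overline F(x)\leqslant M$, one gets $\overline G(x-1)/\overline H(x)\to0$. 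Hence $\limsup_{x\to\infty}\overline H(x-1)/\overline H(x)\leqslant M<\infty$ and $H\in\mathcal{OL}$.

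The crux is the error bound in the second case. One cannot split the convolution at $x/2$ (the usual device for long-tailed laws), because $\mathcal{OL}$ gives no control of $\overline F(x/2)$ by $\overline F(x)$. The step that makes the argument go through is to split at distance $1$ from the argument, so that every ``good'' piece is compared with $\overline F$ only across a unit shift, where $\mathcal{OL}$ supplies the constant $M$, while the single ``bad'' piece is exactly a tail of $G$, which the domination assumption controls once the elementary lower bound $\overline H(x)\geqslant c_0\overline F(x)$ is in hand. I expect the only other place needing care to be the bookkeeping in that lower bound, namely handling $a_0$ of either sign and extracting an explicit positive constant from the $\mathcal{OL}$ property of $F$.
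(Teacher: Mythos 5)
Your proof is correct, and for the main case it takes a genuinely different route from the paper's. The case where both $F_{X_1},F_{X_2}\in\mathcal{OL}$ coincides with the paper's argument (part III of its proof): an application of Lemma~\ref{aul1} with $t=1$; your choice of the fixed cut $v=0$ is a harmless simplification of the paper's $v=M\to\infty$, since only finiteness of the $\limsup$ is needed. The difference lies in the treatment of condition \eqref{a1}. The paper splits this into two sub-cases: when $X_2$ is bounded above (so that $\overline{F}_{X_2}$ eventually vanishes and no ratio involving $\overline{F}_{X_2}$ can appear), it estimates the convolution integral directly; when $\overline{F}_{X_2}>0$ everywhere, it splits the integral at $y=x-M$, bounds the remainder by $\overline{F}_{X_2}(x-M)$, lower-bounds the denominator by $\overline{F}_{X_1}(x-M)\,\overline{F}_{X_2}(M)$, and performs a double limit ($x\to\infty$, then $M\to\infty$). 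Your single argument — splitting at $y=x-1$ so that the good piece is controlled by the unit-shift constant $M=\sup_{t\geqslant 0}\overline{F}_{X_1}(t-1)/\overline{F}_{X_1}(t)$, and lower-bounding $\overline{F_{X_1}*F_{X_2}}(x)\geqslant p\,\overline{F}_{X_1}(x-a_0)\geqslant c_0\overline{F}_{X_1}(x)$ via a \emph{fixed} window of positive $F_{X_2}$-mass — handles bounded and unbounded $X_2$ uniformly, because such a window exists for every distribution function. This eliminates both the case distinction and the double limit; the only thing the paper's version buys in exchange is a marginally sharper constant, $\limsup_{M\to\infty}\overline{F}_{X_1}(M-1)/\overline{F}_{X_1}(M)$ instead of your global supremum $M$, which is irrelevant for membership in $\mathcal{OL}$ via criterion \eqref{ii}. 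All the individual steps you flag as delicate (positivity of the convolution tail, the inequality $\overline{F}_{X_1}(x-1-y)\leqslant M\overline{F}_{X_1}(x-y)$ for $y\leqslant x-1$, and the use of $\liminf_{x\to\infty}\overline{F}_{X_1}(x-a_0)/\overline{F}_{X_1}(x)>0$ from the definition of $\mathcal{OL}$) are handled correctly.
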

\endgroup
\begin{proof} We split the proof into three parts.

\begingroup
\abovedisplayskip=5pt
\belowdisplayskip=5pt
\textbf{I.} First, suppose that $\mathbb{P}(X_2\leqslant D)=1$
for some $D>0$. In this case, condition \eqref{a1} holds evidently.

For each real $x$, we have
\begin{align*}
\overline{F_{X_1}*F_{X_2}}(x)&=\mathbb{P}(X_1+X_2>x)= \int\limits_{(-\infty,D]} \overline{F}_{X_1}(x-y){\mbox{d}}F_{X_2}(y).
\end{align*}
Hence, for such $x$,
\begin{align*}
\frac{\overline{F_{X_1}*F_{X_2}}(x-1)}{\overline{F_{X_1}*F_{X_2}}(x)}&=\frac{\int_{(-\infty,D]} \overline{F}_{X_1}(x-1-y)\frac{\overline{F}_{X_1}(x-y)}{\overline{F}_{X_1}(x-y)}\mbox{d}F_{X_2}(y)}{\int_{(-\infty,D]} \overline{F}_{X_1}(x-y)\mbox{d}F_{X_2}(y)} \\
&\leqslant \frac{\int_{(-\infty,D]} \sup\limits_{y\leqslant D}\frac{\overline{F}_{X_1}(x-1-y)}{\overline{F}_{X_1}(x-y)}{\overline{F}_{X_1}(x-y)}\mbox{d}F_{X_2}(y)}{\int_{(-\infty,D]} \overline{F}_{X_1}(x-y)\mbox{d}F_{X_2}(y)}\\
& =  \sup_{z \geqslant x-D}\frac{\overline{F}_{X_1}(z-1)}{\overline{F}_{X_1}(z)}.
\end{align*}
\endgroup
This estimate implies that
\begin{align*}
\limsup_{x \rightarrow\infty}\frac{\overline{F_{X_1}*F_{X_2}}(x-1)}{\overline{F_{X_1}*F_{X_2}}(x)}&\leqslant\limsup_{x \rightarrow\infty} \sup_{z \geqslant x-D}\frac{\overline{F}_{X_1}(z-1)}{\overline{F}_{X_1}(z)} \\
&= \limsup_{y \rightarrow\infty} \frac{\overline{F}_{X_1}(y-1)}{\overline{F}_{X_1}(y)} \\
&<\infty
\end{align*}
because $F_{X_1}\in\mathcal{OL}$. So, $F_{X_1}*F_{X_2} \in\mathcal
{OL}$ as well.

\textbf{II.} Now let us consider the case where condition \eqref
{a1} holds but $\overline{F}_{X_2}(x)>0$ for all $x\in\mathbb{R}$.
For each real $x$, we have
\begin{align*}
\overline{F_{X_1}*F_{X_2}}(x)&=\int\limits_{-\infty}^{\infty} \overline
{F_{X_1}}(x-y)\mbox{d}F_{X_2}(y).
\end{align*}
Therefore,
\begin{align*}
 \overline{F_{X_1}*F_{X_2}}(x-1)&=\bigg(\, \int\limits_{(-\infty,\,
x-M]}+ \int\limits_{(x-M,\infty)} \bigg) \overline{F}_{X_1}(x-1-y)\mbox
{d}F_{X_2}(y)\\
&{\leqslant} \int\limits_{(-\infty,\,x-M]}\overline{F}_{X_1}(x-1-y)\frac
{\overline{F}_{X_1}(x-y)}{\overline{F}_{X_1}(x-y)}\mbox
{d}F_{X_2}(y)+\overline{F}_{X_2}(x-M) \\
&{\leqslant} \sup_{z \geqslant M}\frac{\overline
{F}_{X_1}(z-1)}{\overline{F}_{X_1}(z)}\!\int\limits_{(-\infty,\,
x-M]}\!\overline{F}_{X_1}(x-y)\mbox{d}F_{X_2}(y)+\overline{F}_{X_2}(x-M)
\end{align*}
for all $M,x$ such that $0<M<x-1$.
In addition, for such $M$ and $x$, we obtain
\begin{align*}
\overline{F_{X_1}*F_{X_2}}(x)&\geqslant \int\limits_{(-\infty,\,x-M]}\overline{F}_{X_1}(x-y)\mbox{d}F_{X_2}(y),\\
\overline{F_{X_1}*F_{X_2}}(x)&\geqslant\int\limits_{(M,\infty)}\overline{F}_{X_1}(x-y)\mbox{d}F_{X_2}(y) \\
&\geqslant\overline{F}_{X_1}(x-M)\overline{F}_{X_2}(M).
\end{align*}
The obtained estimates imply that
\begin{eqnarray*}
\frac{\overline{F_{X_1}*F_{X_2}}(x-1)}{\overline{F_{X_1}*F_{X_2}}(x)}
\leqslant\sup_{z \geqslant M}\frac{\overline{F}_{X_1}(z-1)}{\overline
{F}_{X_1}(z)}+\frac{\overline{F}_{X_2}(x-M)}{\overline
{F}_{X_1}(x-M)\overline{F}_{X_2}(M)}
\end{eqnarray*}
for all $x$ and $M$ such that $0<M<x-1$. Consequently,%\vadjust{\eject}
\begin{align*}
  \limsup_{x \rightarrow\infty} \frac{\overline
{F_{X_1}*F_{X_2}}(x-1)}{\overline{F_{X_1}*F_{X_2}}(x)}
& \leqslant
\sup_{z \geqslant M}\frac{\overline{F}_{X_1}(z-1)}{\overline
{F}_{X_1}(z)}+\frac{1}{\overline{F}_{X_2}(M)}\limsup_{x \rightarrow
\infty}\frac{\overline{F}_{X_2}(x-M)}{\overline{F}_{X_1}(x-M)}\\
&\quad = \sup_{z \geqslant M}\frac{\overline{F}_{X_1}(z-1)}{\overline{F}_{X_1}(z)}
\end{align*}
for all positive $M$.
Therefore,
\begin{equation*}
\limsup_{x \rightarrow\infty} \frac{\overline
{F_{X_1}*F_{X_2}}(x-1)}{\overline{F_{X_1}*F_{X_2}}(x)} \leqslant\limsup
_{M\rightarrow\infty}\frac{\overline{F}_{X_1}(M-1)}{\overline
{F}_{X_1}(M)}<\infty
\end{equation*}
because $F_{X_1}$ is $\mathcal{O}$-exponential. Consequently,
$F_{X_1}*F_{X_2} \in\mathcal{OL}$ by \eqref{ii}.

\textbf{III.} It remains to prove the assertion when both d.f.s
$F_{X_1}$ and $F_{X_2}$ are $\mathcal{O}$-exponen\-tial. By Lemma \ref
{aul1} we have
\begin{eqnarray*}
\frac{\overline{F_{X_1}*F_{X_2}}(x-1)}{\overline{F_{X_1}*F_{X_2}}(x)}
\leqslant\max\bigg\{\sup\limits_{z \geqslant M} \frac{\overline
{F}_{X_1}(z-1)}{\overline{F}_{X_1}(z)},\sup\limits_{z \geqslant x-M+1}
\frac{\overline{F}_{X_2}(z-1)}{\overline{F}_{X_2}(z)}\bigg\}
\end{eqnarray*}
for all $x$ and $M$ such that $0<M<x-1$. Therefore, for every positive $M$,
\begin{align*}
&\limsup_{x \rightarrow\infty} \frac{\overline
{F_{X_1}*F_{X_2}}(x-1)}{\overline{F_{X_1}*F_{X_2}}(x)}\\
&\quad\leqslant \max\bigg\{\sup\limits_{z \geqslant M}
\frac{\overline{F}_{X_1}(z-1)}{\overline{F}_{X_1}(z)},\limsup_{x
\rightarrow\infty}\sup\limits_{z \geqslant x-M+1} \frac{\overline
{F}_{X_2}(z-1)}{\overline{F}_{X_2}(z)}\bigg\} \\
& \quad= \max\bigg\{\sup\limits_{z \geqslant M} \frac
{\overline{F}_{X_1}(z-1)}{\overline{F}_{X_1}(z)},\limsup_{y \rightarrow
\infty} \frac{\overline{F}_{X_2}(y-1)}{\overline{F}_{X_2}(y)}\bigg\}.
\end{align*}
Letting $M$ tend to infinity, we get that
\begin{align*}
&\limsup_{x \rightarrow\infty} \frac{\overline
{F_{X_1}*F_{X_2}}(x-1)}{\overline{F_{X_1}*F_{X_2}}(x)}\\
&\quad\leqslant\max\bigg\{\limsup\limits_{M \rightarrow
\infty} \frac{\overline{F}_{X_1}(M-1)}{\overline{F}_{X_1}(M)},\limsup
_{y \rightarrow\infty} \frac{\overline{F}_{X_2}(y-1)}{\overline
{F}_{X_2}(y)}\bigg\}<\infty
\end{align*}
because $F_{X_1}$ and $F_{X_2}$ belong to class $\mathcal{OL}$.
Consequently, $F_{X_1}*F_{X_2} \in\mathcal{OL}$ due to requirement
\eqref{ii}. Lemma \ref{aul3} is proved.
\end{proof}

\begin{lemma}\label{aul4} \textit{Let $\{X_1, X_2,\ldots,X_n\}$ be
independent nonnegative r.v.s with d.f.s $\{F_{X_1},\break F_{X_2},\ldots,
F_{X_n}\}$. Let $F_{X_1} \in\mathcal{OL}$ and suppose that, for each
$k \in\{2,3,\dots,n\}$, either $\lim\limits_{x \rightarrow\infty
}\frac{\overline{F}_{X_k}(x)}{\overline{F}_{X_1}(x)}=0$ or
$F_{X_k} \in\mathcal{OL}$. Then the d.f. $F_{X_1}*F_{X_2}* \cdots*
F_{X_n}$ belongs to the class $ \mathcal{OL}$.}
\end{lemma}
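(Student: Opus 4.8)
The plan is to argue by induction on $n$, peeling off one summand at a time and invoking Lemma~\ref{aul3} at each step. The base case $n=1$ is immediate, since $F_{X_1}\in\mathcal{OL}$ by hypothesis; indeed the case $n=2$ is nothing but Lemma~\ref{aul3} itself.

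For the inductive step I would assume the claim for any $n-1$ independent nonnegative summands and set $G:=F_{X_1}*F_{X_2}*\cdots*F_{X_{n-1}}$. The hypotheses of the lemma, restricted to the index set $\{1,2,\ldots,n-1\}$, are exactly those demanded by the inductive hypothesis, so $G\in\mathcal{OL}$; in particular $\overline{G}(x)>0$ for all $x\in\mathbb{R}$. I would then apply Lemma~\ref{aul3} with $G$ playing the role of $F_{X_1}$ and $F_{X_n}$ playing the role of $F_{X_2}$, which gives $G*F_{X_n}=F_{X_1}*\cdots*F_{X_n}\in\mathcal{OL}$ once the appropriate alternative is verified.

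What remains is to check that $F_{X_n}$ satisfies one of the two alternatives of Lemma~\ref{aul3} \emph{relative to $G$}. If $F_{X_n}\in\mathcal{OL}$, the second alternative holds verbatim. The delicate case is the first alternative, where the hypothesis only supplies $\lim_{x\rightarrow\infty}\overline{F}_{X_n}(x)/\overline{F}_{X_1}(x)=0$, a ratio measured against $F_{X_1}$ rather than against the partial convolution $G$. This is the one point that genuinely uses nonnegativity of the $X_k$: since $X_1,\ldots,X_{n-1}\geqslant0$, the partial sum dominates its first term, so that
\[
\overline{G}(x)=\mathbb{P}(X_1+\cdots+X_{n-1}>x)\geqslant\mathbb{P}(X_1>x)=\overline{F}_{X_1}(x)
\]
for every real $x$. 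Hence $\overline{F}_{X_n}(x)/\overline{G}(x)\leqslant\overline{F}_{X_n}(x)/\overline{F}_{X_1}(x)\rightarrow0$, and the first alternative is verified against $G$ as well.

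Combining the two cases, Lemma~\ref{aul3} yields $F_{X_1}*\cdots*F_{X_n}=G*F_{X_n}\in\mathcal{OL}$, closing the induction. I expect the only real obstacle to be precisely the transfer just described: the domination condition is phrased against $F_{X_1}$, whereas at each inductive step one must compare $F_{X_n}$ against the growing convolution $G$. The elementary tail inequality $\overline{G}\geqslant\overline{F}_{X_1}$ resolves this cleanly, and it is the crux of the whole argument.
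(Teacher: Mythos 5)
Your proof is correct and is essentially identical to the paper's: the authors also induct on $n$, apply Lemma~\ref{aul3} with the partial convolution in the role of $F_{X_1}$, and handle the first alternative by exactly your observation that nonnegativity gives $\overline{F_{X_1}*\cdots*F_{X_{m}}}(x)=\mathbb{P}(X_1+\cdots+X_m>x)\geqslant\mathbb{P}(X_1>x)=\overline{F}_{X_1}(x)$, so the domination condition transfers from $F_{X_1}$ to the growing convolution. You have correctly identified the crux of the argument, and no gap remains.
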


\begin{proof}
We use induction on $n$.
If $n=2$, then the statement follows from Lemma \ref{aul3}. Suppose
that the statement holds if $n=m$, that is, $F_{X_1}*F_{X_2}*\cdots*
F_{X_m} \in\mathcal{OL}$, and we will show that the statement is
correct for $n=m+1$.

Conditions of the lemma imply that $F_{X_{m+1}} \in\mathcal{OL}$ or
\begin{align*}
&\lim_{x \rightarrow\infty} \frac{\overline{F}_{X_{m+1}}(x)}{\overline
{F_{X_1}*F_{X_2}*\cdots*F_{X_m}}(x)}
 = \lim_{x \rightarrow\infty} \frac{\overline
{F}_{X_{m+1}}(x)}{\mathbb{P}(X_1+\cdots+X_m>x)} \\
&\quad\leqslant \lim_{x \rightarrow\infty} \frac{\overline
{F}_{X_{m+1}}(x)}{\mathbb{P}(X_1>x)} =  \lim_{x \rightarrow\infty} \frac{\overline
{F}_{X_{m+1}}(x)}{\overline{F}_{X_1}(x)} =0.
\end{align*}
So, using Lemma \ref{aul3} again, we get
\begin{eqnarray*}
F_{X_1}*F_{X_2}*\cdots*F_{X_{m+1}}=(F_{X_1}*F_{X_2}*\cdots
*F_{X_m})*F_{X_{m+1}}\in\mathcal{OL}.
\end{eqnarray*}
We see that the statement of the lemma holds for $n=m+1$ and,
consequently, by induction, for all $n\in\mathbb{N}$. The lemma is proved.
\end{proof}

\section{Proofs of the main results}\label{p}
\begingroup\abovedisplayskip=12pt\belowdisplayskip=12pt
In this section, we present proofs of our main results.\medskip

\noindent\textbf{Proof of Theorem \ref{tt1}.}
Conditions of Theorem and Lemma \ref{aul4} imply that the d.f.\break
$F_{S_\kappa}(x)=\mathbb{P}(S_\kappa\leqslant x)$ belongs to the class
$ \mathcal{OL}$. So, we have
\begin{eqnarray}\label{f1}
\limsup\limits_{x\rightarrow\infty}\frac{\overline{F}_{S_{\kappa
}}(x-1)}{\overline{F}_{S_{\kappa}}(x)}<\infty
\end{eqnarray}
or, equivalently,\vspace{-6pt}
\begin{eqnarray}\label{f2}
\sup\limits_{x\geqslant0}\frac{\overline{F}_{S_{\kappa
}}(x-1)}{\overline{F}_{S_{\kappa}}(x)}\leqslant c_1
\end{eqnarray}
for some positive constant $c_1$.

We observe that, for all $x \geqslant0$,
\begin{eqnarray}\label{f3}
\frac{\mathbb{P}(S_\eta> x-1)}{\mathbb{P}(S_\eta> x)}=\mathcal
{J}_1(x)+\mathcal{J}_2(x),
\end{eqnarray}
where
\begin{eqnarray*}
\mathcal{J}_1(x)=\frac{\mathbb{P}(S_\eta> x-1,\eta\leqslant\kappa
)}{\mathbb{P}(S_\eta> x)}, \\
\mathcal{J}_2(x)=\frac{\mathbb{P}(S_\eta> x-1,\eta> \kappa)}{\mathbb
{P}(S_\eta> x)}.
\end{eqnarray*}
Since $\kappa\in{\rm{supp}}(\eta)$, we obtain
\begin{align*}
\mathcal{J}_1(x)&=\frac{\sum_{n=0}^{\kappa}\mathbb{P}(S_n >
x-1) \mathbb{P}(\eta= n)}{\sum_{n=0}^{\infty}\mathbb{P}(S_n >
x)\mathbb{P}(\eta= n)} \\
&\leqslant \frac{1}{\mathbb{P}(S_\kappa> x)\mathbb{P}(\eta= \kappa
)}\sum\limits_{n=0}^{\kappa}\mathbb{P}(S_\kappa> x-1) \mathbb{P}(\eta
= n) \\
&=\frac{\mathbb{P}(S_\kappa> x-1)}{\mathbb{P}(S_\kappa> x)} \frac
{\mathbb{P}(\eta\leqslant\kappa)}{ \mathbb{P}(\eta= \kappa)}.
\end{align*}\endgroup
Hence, it follows from \eqref{f1} that
\begin{eqnarray}\label{f4}
\limsup\limits_{x\rightarrow\infty}\mathcal{J}_1(x)<\infty.
\end{eqnarray}
By Lemma \ref{aul1} we have
\begin{eqnarray}\label{f5}
\frac{\mathbb{P}(S_{\kappa+1} > x-1)}{\mathbb{P}(S_{\kappa+1} >
x)}\leqslant\max\bigg\{\sup\limits_{z \geqslant M} \frac{\mathbb
{P}(S_{\kappa} > z-1)}{\mathbb{P}(S_{\kappa} > z)}, \sup\limits_{z
\geqslant x - M +1} \frac{\overline{F}_{\xi_{\kappa+1}}(z-1)}{\overline
{F}_{\xi_{\kappa+1}}(z)}\bigg\}
\end{eqnarray}
for all real $x$ and $M$.

The third condition of the theorem implies that
\begin{eqnarray}\label{f6}
\sup\limits_{x \geqslant0} \frac{\overline{F}_{\xi_{\kappa
+k}}(x-1)}{\overline{F}_{\xi_{\kappa+k}}(x)} \leqslant c_2
\end{eqnarray}
for all $k\in\mathbb{N}$ and some positive $c_2$.

If we choose $M={x}/{2}$ in estimate \eqref{f5}, then, using \eqref
{f2}, we get
\begin{eqnarray}\label{oopa}
\sup\limits_{x \geqslant0} \frac{\mathbb{P}(S_{\kappa+1} >
x-1)}{\mathbb{P}(S_{\kappa+1} > x)}\leqslant\max\left\{c_1,c_2\right
\}:=c_3.
\end{eqnarray}
Applying Lemma \ref{aul1} again, we obtain
\begin{eqnarray*}
\frac{\mathbb{P}(S_{\kappa+2} > x-1)}{\mathbb{P}(S_{\kappa+2} >
x)}\leqslant\max\bigg\{\sup\limits_{z \geqslant M} \frac{\mathbb
{P}(S_{\kappa+1} > z-1)}{\mathbb{P}(S_{\kappa+1} > z)}, \sup\limits
_{z \geqslant x - M +1} \frac{\overline{F}_{\xi_{\kappa
+2}}(z-1)}{\overline{F}_{\xi_{\kappa+2}}(z)}\bigg\}.
\end{eqnarray*}
By choosing $M={x}/{2}$ we get from inequalities \eqref{f6} and \eqref
{oopa} that
\begin{eqnarray*}
\sup\limits_{x \geqslant0} \frac{\mathbb{P}(S_{\kappa+2} >
x-1)}{\mathbb{P}(S_{\kappa+2} > x)}\leqslant c_3.
\end{eqnarray*}
Continuing the process, we find
\begin{eqnarray*}
\sup\limits_{x \geqslant0} \frac{\mathbb{P}(S_{\kappa+k} >
x-1)}{\mathbb{P}(S_{\kappa+k} > x)}\leqslant c_3
\end{eqnarray*}
for all $k\in\mathbb{N}$. Therefore,%\vadjust{\eject}
\begin{align}\label{f8}
\mathcal{J}_2(x)&=\frac{1}{\mathbb{P}(S_\eta> x)} \sum_{k=1}^{\infty}
\mathbb{P}(S_{\kappa+k} > x-1) \mathbb{P}(\eta= \kappa+k) \nonumber
\\
&\leqslant \frac{c_3}{\mathbb{P}(S_\eta> x)} \sum_{k=1}^{\infty}
\mathbb{P}(S_{\kappa+k} > x) \mathbb{P}(\eta= \kappa+k) \nonumber\\
&\leqslant \frac{c_3 \mathbb{P}(S_\eta> x)}{\mathbb{P}(S_\eta> x)}=c_3
\end{align}
for all $x \geqslant0$.

The obtained relations \eqref{f3}, \eqref{f4}, and \eqref{f8} imply that
\begin{eqnarray*}
\limsup\limits_{x\rightarrow\infty} \frac{\mathbb{P}(S_\eta>
x-1)}{\mathbb{P}(S_\eta> x)} < \infty.
\end{eqnarray*}
Therefore, the d.f. $F_{S_\eta}$ belongs to the class $ \mathcal{OL}$
due to requirement \eqref{ii}. Theorem \ref{tt1} is proved.
\hfill$\square$

\medskip
\noindent\textbf{Proof of Theorem \ref{tt2}.} The statement of the theorem can
be derived from Theorem \ref{tt1} or proved directly.
We present the direct proof of Theorem \ref{tt2}.

It is evident that $S_k=\xi_\kappa+ \sum_{n=1,\ n\neq\kappa
}^{k} \xi_n$ for each $k \geqslant\kappa$. Hence, by Lemma \ref{aul4},
$F_{S_k} \in\mathcal{OL}$ for all $ \kappa\leqslant k\leqslant D$.

If $x \geqslant1$, then we have
\begin{align}\label{pp}
\frac{\mathbb{P}(S_\eta> x-1)}{\mathbb{P}(S_\eta> x)}&=\frac{\sum
_{n=1 \atop n\in\rm{supp} (\eta)}^{D} \mathbb{P}(S_n>x-1)
\mathbb{P} (\eta=n)}{\sum_{n=1 \atop n\in\rm{supp} (\eta)
}^{D} \mathbb{P}(S_n>x) \mathbb{P} (\eta=n)}\nonumber\\
&\leqslant \frac{\mathbb{P}(S_\kappa>x-1) \mathbb{P} (\eta\leqslant
\kappa) + \sum_{n=\kappa+1 \atop n \in\rm{supp} (\eta)}^{D}
\mathbb{P}(S_n>x-1) \mathbb{P} (\eta=n)} {\mathbb{P} (S_\kappa
>x)\mathbb{P} (\eta=\kappa)+\sum_{n=\kappa+1 \atop n\in\rm
{supp} (\eta)}^{D} \mathbb{P}(S_n>x) \mathbb{P} (\eta=n)}\nonumber\\
&\leqslant \max\bigg\{\frac{\mathbb{P}(S_\kappa>x-1)
\mathbb{P} (\eta\leqslant\kappa)} {\mathbb{P} (S_\kappa>x)\mathbb
{P}(\eta=\kappa)},\max\limits_{\kappa+1 \leqslant n \leqslant D \atop
n \in\rm{supp} (\eta)} \frac{\mathbb{P}(S_n > x-1)}{\mathbb{P}(S_n >
x)}\bigg\},
\end{align}
where in the last step we use the inequality
\[
\frac{a_1+a_2+\cdots+a_n}{b_1+b_2+\cdots+b_n}\leqslant\max\left\{\frac
{a_1}{b_1},\frac{a_2}{b_2},\ldots,\frac{a_n}{b_n}\right\},
\]
provided that $n\geqslant1$ and $a_i,b_i>0$ for $i\in\{1,2,\ldots,n\}$.

Since $F_{S_n} \in\mathcal{OL}$ for all $n \geqslant\kappa$, we get
from \eqref{pp} that
\begin{eqnarray}\label{pp0}
\limsup\limits_{x \rightarrow\infty}\frac{\mathbb{P}(S_\eta>
x-1)}{\mathbb{P}(S_\eta> x)}<\infty,
\end{eqnarray}
and the statement of Theorem \ref{tt2} follows.
\hfill$\square$

\medskip
\noindent\textbf{Proof of Theorem \ref{tt3}.} As usual, it suffices to prove
relation \eqref{pp0}.
If $x\geqslant0$, then we have
\begin{align}
\mathbb{P}(S_\eta>x)&=\sum\limits_{n=1}^{\infty}\mathbf
{P}(S_n>x)\mathbb{P}(\eta=n)\nonumber\\
&\geqslant\mathbb{P}(S_\kappa>x)\mathbb{P}(\eta=\kappa)\nonumber\\
&\geqslant\overline{F}_{\xi_\kappa}(x)\mathbb{P}(\eta=\kappa\xch{)}{))}.\label{ppp2}
\end{align}
Similarly, for $K\geqslant2$ and $x\geqslant2K$,
\begin{align}\label{opa}
\mathbb{P}(S_\eta>x-1)&=\sum\limits_{n=1}^{\kappa}\mathbf
{P}(S_n>x-1)\mathbb{P}(\eta=n)\nonumber\\
&\quad +\sum\limits_{1\leqslant k\leqslant(x-1)/(K-1)}\mathbf{P}(S_{\kappa
+k}>x-1)\mathbb{P}(\eta=\kappa+k)\nonumber\\\
&\quad +\sum\limits_{ k>(x-1)/(K-1)}\mathbf{P}(x-1<S_{\kappa+k}\leqslant
x)\mathbb{P}(\eta=\kappa+k)\nonumber\\\
&\quad +\sum\limits_{ k>(x-1)/(K-1)}\mathbf{P}(S_{\kappa+k}> x)\mathbb
{P}(\eta=\kappa+k)\nonumber\\\
&:=\mathcal{K}_1(x)+\mathcal{K}_2(x)+\mathcal{K}_3(x)+\mathcal{K}_4(x).
\end{align}
The distribution function $F_{S_\kappa}$ belongs to the class $\mathcal
{OL}$ due to Lemma \ref{aul4}. So, by estimate \eqref{f4} we
have
\begin{equation}\label{px1}
\limsup\limits_{x \rightarrow\infty}\frac{\mathcal{K}_1(x)}{\mathbb
{P}(S_\eta>x)}=\limsup\limits_{x \rightarrow\infty}\mathcal
{J}_1(x)<\infty.
\end{equation}
Now we consider the sum $\mathcal{K}_2(x)$. Since $F_{S_\kappa}$ is
$\mathcal{O}$-exponential, we have
\begin{equation*}
\sup_{x\geqslant0} \frac{\mathbb{P}(S_\kappa> x-1)}{\mathbb
{P}(S_\kappa> x)} \leqslant c_4
\end{equation*}
with some positive constant $c_4$.
On the other hand, the third condition of Theorem \ref{tt3} implies that
\begin{equation*}
\sup_{x\geqslant c_5}\frac{\overline{F}_{\xi_{\kappa+
k}}(x-1)}{\overline{F}_{\xi_{\kappa+k}}(x)}\leqslant c_6
\end{equation*}
for some constants $c_5>2$, $c_6>0$ and all $k\in\mathbb{N}$.

By Lemma \ref{aul1} (with $v=c_5$) we have
\[
\frac{\mathbb{P}(S_{\kappa+1} > x-1)}{\mathbb{P}(S_{\kappa+1} > x)}
\leqslant\max\bigg\{\sup_{z \geqslant x-c_5+1} \frac{\mathbb
{P}(S_{\kappa} > z-1)}{\mathbb{P}(S_{\kappa} > z)}, \sup_{z \geqslant
c_5} \frac{\overline{F}_{\xi_{\kappa+1}}(z-1)}{\overline{F}_{\xi
_{\kappa+1}}(z)}\bigg\}.
\]
Consequently,
\begin{equation*}
\sup_{x \geqslant c_5} \frac{\mathbb{P}(S_{\kappa+1} > x-1)}{\mathbb
{P}(S_{\kappa+1} > x)}\leqslant\max\left\{c_4,c_6\right\}:=c_7.
\end{equation*}
Applying Lemma \ref{aul1} again for the sum $S_{\kappa+2}=S_{\kappa
+1}+\xi_{\kappa+2}$ (with $v=x/2+1/2$), we get%\vadjust{\eject}
\begin{equation*}
\frac{\mathbb{P}(S_{\kappa+2} > x-1)}{\mathbb{P}(S_{\kappa+2} > x)}
\leqslant\max\bigg\{\sup_{z \geqslant\frac{x}{2}+\frac{1}{2} }\frac
{\mathbb{P}(S_{\kappa+1} > z-1)}{\mathbb{P}(S_{\kappa+1} > z)}, \sup_{z
\geqslant\frac{x}{2}+\frac{1}{2}} \frac{\overline{F}_{\xi_{\kappa
+2}}(z-1)}{\overline{F}_{\xi_{\kappa+2}}(z)}\bigg\}.
\end{equation*}
If $x \geqslant2(c_5-1)+1$, then ${x}/{2}+{1}/{2} \geqslant c_5$.
Therefore, by the last inequality we obtain that
\begin{eqnarray*}
\sup_{x \geqslant2(c_5-1)+1}\frac{\mathbb{P}(S_{\kappa+2} >
x-1)}{\mathbb{P}(S_{\kappa+2} > x)} \leqslant c_7.
\end{eqnarray*}
Applying Lemma \ref{aul1} once again (with $v=x/3+2/3$), we get
\begin{equation*}
\frac{\mathbb{P}(S_{\kappa+3} > x-1)}{\mathbb{P}(S_{\kappa+3} > x)}
\leqslant\max\bigg\{\sup_{z \geqslant\frac{2x}{3}+\frac{1}{3} }\frac
{\mathbb{P}(S_{\kappa+2} > z-1)}{\mathbb{P}(S_{\kappa+2} > z)}, \sup_{z
\geqslant\frac{x}{3}+\frac{2}{3}} \frac{\overline{F}_{\xi_{\kappa
+3}}(z-1)}{\overline{F}_{\xi_{\kappa+3}}(z)}\bigg\}.
\end{equation*}\eject
\noindent If $x \geqslant3(c_5-1)+1$, then
$
{2x}/{3}+{1}/{3} \geqslant 2(c_5-1)+1 $ and
${x}/{3}+{2}/{3} \geqslant c_5$.
So, the last estimate implies
\begin{eqnarray*}
\sup_{x \geqslant3(c_5-1)+1}\frac{\mathbb{P}(S_{\kappa+3} >
x-1)}{\mathbb{P}(S_{\kappa+3} > x)} \leqslant c_7.
\end{eqnarray*}
Continuing the process, we can get that
\begin{eqnarray}\label{opa1}
\sup_{x \geqslant k(c_5-1)+1}\frac{\mathbb{P}(S_{\kappa+k} >
x-1)}{\mathbb{P}(S_{\kappa+k} > x)} \leqslant c_7
\end{eqnarray}
for all $k\in\mathbb{N}$.

We can suppose that $K=c_5$ in representation \eqref{opa}. In such a
case, it follows from inequality \eqref{opa1} that
\begin{align}\label{pp2}
\limsup\limits_{x \rightarrow\infty}\frac{\mathcal{K}_2(x)}{\mathbb
{P}(S_\eta>x)}&\leqslant\limsup\limits_{x \rightarrow\infty} \frac
{c_7}{\mathbb{P}(S_\eta> x)}\sum\limits_{1 \leqslant k \leqslant\frac
{x-1}{c_5-1}}\mathbb{P}(S_{\kappa+k} > x)\mathbb{P}(\eta=\kappa
+k)\nonumber\\
&\leqslant c_7.
\end{align}
Since, obviously,
\begin{equation}\label{pp3}
\limsup\limits_{x \rightarrow\infty}\frac{\mathcal{K}_4(x)}{\mathbb
{P}(S_\eta>x)}\leqslant1,
\end{equation}
it remains to estimate sum $\mathcal{K}_3(x)$.
Using Lemma \ref{aul2}, we obtain
\begin{equation*}
\mathcal{K}_3(x)\leqslant A\sum\limits_{k>\frac{x-1}{c_5-1}}\mathbb
{P}(\eta=\kappa+k)\Bigg(\sum\limits_{l=1}^{k}\Big(1-\sup\limits_{x\in
\mathbb{R}}\mathbb{P}(x-1\leqslant\xi_{\kappa+l}\leqslant x)\Big)\Bigg)^{-1/2}
\end{equation*}
with some absolute positive constant $A$. By the fourth condition of
the theorem,
\begin{equation*}
\frac{1}{k}\sum\limits_{l=1}^{k}\,\sup\limits_{x\in\mathbb{R}}
\big(\overline{F}_{\xi_{\kappa+l}}(x-1)-\overline{F}_{\xi_{\kappa
+l}}(x)\big)\leqslant1-\Delta
\end{equation*}
for some $0<\Delta<1$ and all sufficiently large $k$. So, for such $k$,%\vadjust{\eject}
\begin{equation*}
\sum\limits_{l=1}^{k}\Big(1-\sup\limits_{x\in\mathbb{R}}\mathbb
{P}(x-1\leqslant\xi_{\kappa+l}\leqslant x)\Big)\geqslant k\Delta.
\end{equation*}
From the last estimate it follows that
\begin{align*}
\mathcal{K}_3(x)&\leqslant\frac{A}{\sqrt{\Delta}}\sum\limits_{k>\frac
{x-1}{c_5-1}}\frac{1}{\sqrt{k}}\mathbb{P}(\eta=\kappa+k)\\
&\leqslant \frac{A}{\sqrt{\Delta}}\sqrt{\frac{c_5-1}{x-1}}\mathbb
{P}\bigg(\eta>\kappa+\frac{x-1}{c_5-1}\bigg)
\end{align*}
for sufficiently large $x$. Therefore,
\begin{align}\label{ppp}
&\limsup\limits_{x \rightarrow\infty}\frac{\mathcal{K}_3(x)}{\mathbb
{P}(S_\eta>x)}\nonumber\\
&\quad \leqslant\frac{A}{\sqrt{\Delta}}\frac{\sqrt
{c_5-1}}{\mathbb{P}(\eta=\kappa)}
\limsup\limits_{x \rightarrow\infty}\frac{\overline{F}_\eta(\frac
{x-1}{c_5-1})}{\sqrt{x-1}\ \overline{F}_{\xi_\kappa}(x-1)}
\limsup\limits_{x \rightarrow\infty}\frac{\overline{F}_{\xi_\kappa
}(x-1)}{\overline{F}_{\xi_\kappa}(x)}\nonumber\\
&\quad <\infty
\end{align}
by estimate \eqref{ppp2} and the last condition of the theorem.
Representation \eqref{opa} and estimates \eqref{px1}, \eqref{pp2},
\eqref{pp3}, and \eqref{ppp} imply the desired inequality \eqref{pp0}.
Theorem \ref{tt3} is proved.
\hfill$\square$

\section{Examples of $\mathcal{O}$-exponential random sums}\label{e}

In this section, we present three examples of random sums $ S_{\eta}$
for which the d.f.s $F_{S_\eta}$ are $\mathcal{O}$-exponential.

\begin{ex}\label{ex1} Let $\{\xi_1,\xi_2,\ldots\}$ be
independent r.v.s. We suppose that the r.v. $\xi_k$ for $k\in\{
1,2,\ldots,D\}$ is distributed according to the Pareto law with
parameters~$k$ and $\alpha$, that is,
\[
\overline{F}_{\xi_{k}}(x)=\left(\frac{k}{k+x}\right)^\alpha,\quad x\geqslant0,
\]
where $k\in\{1,2,\ldots,D\}$, $D\geqslant1$, and $\alpha>0$.
In addition, we suppose that the r.v. $\xi_{D+k}$ for each $k\in\mathbb
{N}$ is distributed according to the exponential law with parameter
$\lambda/k$, that is,
\[
\overline{F}_{\xi_{D+k}}(x)={\rm e}^{-\lambda x/k},\quad x\geqslant0.
\]
\end{ex}

It follows from Theorem \ref{tt1} that the d.f. of the random sum
$S_\eta$ is $\mathcal{O}$-exponential for each counting r.v. $\eta$
independent of $\{\xi_1,\xi_2,\ldots\}$ under the condition $\mathbb
{P}(\eta=\kappa)>0$ for some $\kappa\in\{1,2,\ldots, D\}$ because:
\begin{eqnarray*}
&\bullet& F_{\xi_k}\in\mathcal{L}\subset\mathcal{OL}\quad  \mbox{for each}\
k\leqslant\kappa,\\
&\bullet& \sup\limits_{x\geqslant0}\sup\limits_{k\geqslant1}\frac
{\overline{F}_{\xi_{\kappa+k}}(x-1)}{\overline{F}_{\xi_{\kappa+k}}(x)}\\
&&\hspace{2mm}=\max\bigg\{ \sup\limits_{0\leqslant x\leqslant1}\sup
\limits_{k\geqslant1}\frac{1}{\overline{F}_{\xi_{\kappa+k}}(x)},\
\sup\limits_{x> 1}\sup\limits_{k\geqslant1}\frac{\overline{F}_{\xi
_{\kappa+k}}(x-1)}{\overline{F}_{\xi_{\kappa+k}}(x)}\bigg\}\\
&&\hspace{2mm}=\max\bigg\{ \sup\limits_{0\leqslant x\leqslant1}\max
\bigg\{\max\limits_{1\leqslant k\leqslant D-\kappa}\bigg(\frac{\kappa
+k+x}{\kappa+k}\bigg)^\alpha,\
\sup\limits_{k\geqslant1}\,{\rm e}^{\lambda x/k}\bigg\},\\
&&\quad \  \sup\limits_{ x> 1}\max\bigg\{\max\limits
_{1\leqslant k\leqslant D-\kappa}\bigg(\frac{\kappa+k+x}{\kappa
+k+x-1}\bigg)^\alpha,\
\sup\limits_{k\geqslant1}\,{\rm e}^{\lambda/k}\bigg\}\bigg\}\\
&&\hspace{2mm}\leqslant\max\big\{2^\alpha,\, {\rm e}^{\lambda}\big\}.
\end{eqnarray*}\eject

\begin{ex}\label{ex2} Let a r.v.\  $\eta$ be uniformly
distributed on $\{1,2,\ldots, D\}$, that is,
\[
\mathbb{P}(\eta=k)=\frac{1}{D},\quad  k\in\{1,2,\ldots,D\},
\]
for some $D\geqslant2$. Let $\{\xi_1,\xi_2,\ldots, \xi_D\}$ be
independent r.v.s, where $\xi_1$ is exponentially distributed, and $\xi
_2,\ldots,\xi_D$ are uniformly distributed.
\end{ex}

If the r.v. $\eta$ is independent of the r.v.s $\{\xi_1,\xi_2,\ldots,
\xi_D\}$, then Theorem \ref{tt2} implies that the d.f. of the random
sum $S_\eta$ is $\mathcal{O}$-exponential.

\begin{ex}\label{ex3} Let $\{\xi_1,\xi_2,\ldots\}$ be
independent r.v.s, where $\{\xi_1,\xi_2,\ldots,\xi_{\kappa-1}\}$ are
finitely supported, $\kappa\geqslant2$, and $\xi_\kappa$ is
distributed according to the Weibull law, that is,
\[
\overline{F}_{\xi_\kappa}(x)=\mbox{e}^{-\sqrt{x}},\quad  x\geqslant0.
\]
In addition, we suppose that the r.v. $\xi_{\kappa+k}$ for each
$k=m^2$, $m\geqslant2$, has the d.f. with tail
\[
\overline{F}_{\xi_{\kappa+k}}(x)=
\begin{cases}
1 & {\rm if}\ x<0,\\
\frac{1}{k}& {\rm if}\ 0\leqslant x<k,\\
\frac{1}{k}{\rm e}^{-(x-k)}& {\rm if}\ x\geqslant k,
\end{cases}
\]
whereas for each remaining index $k\notin\{m^2,\mbox{$m\in\mathbb{N}$}\setminus\{
1\}\}$, the r.v. $\xi_{\kappa+k}$ has the exponential distribution,
that is,
\[
\overline{F}_{\xi_{\kappa+k}}(x)={\rm e}^{-x},\quad  x\geqslant0.
\]
\end{ex}

If the counting r.v. $\eta$ is independent of $\{\xi_1,\xi_2,\ldots\}$
and is distributed according to the Poisson law with parameter $\lambda
$, then it follows from Theorem \ref{tt3} that the random sum $S_\eta$
is $\mathcal{O}$-exponentially distributed because:
\begin{eqnarray*}
&\bullet& F_{\xi_\kappa}\in\mathcal{L}\subset\mathcal{OL};\\
&\bullet& \lim\limits_{x\rightarrow\infty}\frac{\overline{F}_{\xi
_k}(x)}{\overline{F}_{\xi_\kappa}(x)}=0\quad  \mbox{if}\ k=1,2,\ldots, \kappa
-1;\\
&\bullet& \sup\limits_{x\geqslant1}\sup\limits_{k\geqslant1}\frac
{\overline{F}_{\xi_{\kappa+k}}(x-1)}{\overline{F}_{\xi_{\kappa+k}}(x)}\\
&&\quad= \sup\limits_{x\geqslant1}\max\bigg\{\sup_{k\geqslant
1,\,k=m^2,\,m\geqslant2}\frac{\overline{F}_{\xi_{\kappa
+k}}(x-1)}{\overline{F}_{\xi_{\kappa+k}}(x)},\,\sup_{k\geqslant1,\,
k\neq m^2}\frac{\overline{F}_{\xi_{\kappa+k}}(x-1)}{\overline{F}_{\xi
_{\kappa+k}}(x)}\bigg\}\\
&&\quad=\sup\limits_{x\geqslant1}\max\Big\{\sup_{k\geqslant
1,\,k=m^2,\,m\geqslant2}\big\{\ind_{[1,k)}(x)+\mbox{e}^{x-k}\ind
_{[k,k+1)}(x)+\mbox{e}\ind_{[k+1,\infty)}(x)\big\},\\ &&
\hspace{19mm}
\sup_{k\geqslant1,\,k\neq m^2}\mbox{e}\Big\}=\mbox{e};\\
&\bullet& \limsup\limits_{k\rightarrow\infty}\frac{1}{k}\sum\limits
_{l=1}^{k}\sup\limits_{x\geqslant0}\big(\overline{F}_{\xi_{\kappa
+l}}(x-1)-\overline{F}_{\xi_{\kappa+l}}(x)\big)\\
&&\quad = \limsup\limits_{k\rightarrow\infty}\frac{1}{k}\Bigg(\sum
\limits_{l=1,\,l=m^2}^{k}\bigg(1-\frac{1}{l}\bigg)+\bigg(1-\frac
{1}{\mbox{e}}\bigg)\sum\limits_{l=1,\,l\neq m^2}^{k}1\Bigg)\\
&&\quad  \leqslant\bigg(1-\frac{1}{\mbox{e}}\bigg);\\
&\bullet& \overline{F}_\eta(x)<\bigg(\frac{{\rm e} \lambda}{x}\bigg
)^x,\quad x>\lambda.
\end{eqnarray*}

Here the last estimate is the well-known Chernof bound for the Poisson
law (see, e.g., p.~97 in \cite{mm}).

As we can see, the r.v.s $\{\xi_1,\xi_2,\ldots\}$ from the last example
satisfy the conditions of Theorem \ref{tt3}, whereas the third
condition of Theorem \ref{tt1} does not hold because, in this case,
\begin{eqnarray*}
\sup\limits_{x\geqslant0}\sup\limits_{k\geqslant1}\frac{\overline
{F}_{\xi_{\kappa+k}}(x-1)}{\overline{F}_{\xi_{\kappa+k}}(x)}
\geqslant \sup\limits_{0 \leqslant x < 1}\sup\limits_{k\geqslant
1}\frac{\overline{F}_{\xi_{\kappa+k}}(x-1)}{\overline{F}_{\xi_{\kappa+k}}(x)}
\geqslant \sup\limits_{0 \leqslant x < 1}\  \sup\limits_{ k=m^2, m
\geqslant2} k = \infty.
\end{eqnarray*}

\section*{Acknowledgments} We would like to thank the anonymous
referees for the detailed and helpful comments on the first and second
versions of the manuscript.

% structpyb loaded by giedrius.virsilas, 2016-03-29 14:49:46

%
\end{document}